\newtheorem{Satz}{Satz}[section]
\newtheorem{Lem}[Satz]{Lemma}
\newtheorem{Cor}[Satz]{Corollary}
\newtheorem{Thm}[Satz]{Theorem}
\newtheorem{Rem}[Satz]{Remark}
\newcommand{\R}{\mathbb{R}}
\newcommand{\inv}{^{-1}}
\newcommand{\Leb}{\mathcal{L}}
\newcommand{\K}{{S^1}}
\newcommand{\Dc}{\bar{D}^2}
\newcommand{\Do}{D^2}
\newcommand{\tn}[1]{\textnormal{#1}}
\newcommand{\tr}{\textrm{d}}
\newcommand{\Rom}[1]{\expandafter\@slowromancap\romannumeral #1@}
\title{Plateau's problem for singular curves}
\author{Paul Creutz}
\address{Paul Creutz, Mathematisches Institut der Universit\"at zu K\"oln, Weyertal 86-90, 50931 K\"oln, Germany}
\email{pcreutz@math.uni-koeln.de}
\thanks{The author was partially supported by the DFG grant SPP 2026.}
\begin{document}
\begin{abstract}
We give a solution of Plateau's problem for singular curves possibly having self-intersections. The proof is based on the solution of Plateau's problem for Jordan curves in very general metric spaces by Alexander Lytchak and Stefan Wenger and hence works also in a quite general setting. However the main result of this paper seems to be new even in $\R^n$.
\end{abstract}
\maketitle
\section{Introduction}
\label{1}
\subsection{Main Result}
\label{1.1}
Classical formulation of Plateau's problem reads: Does there exist a disc of least area spanning a given closed curve $\Gamma$ in $\R^n$? If yes, what can be said about regularity of such discs?\par 
The first question has been positively answered by Douglas and Radó independently for general Jordan curves in 1930 and the second one and its variants have been extensively studied in the last 90 years, see \cite{DHS10} and \cite{DHT10} and the references therein for a detailled account on the literature. However the classical approaches break down if instead of Jordan curves one considers closed curves having self-intersections. Reason for this being that there need not be a disc of least energy spanning $\Gamma$. Joel Hass attacked the question of finding a least area disc for such singular curves in \cite{Has91} and showed by cutting into possibly infinitely many curves, that an area minimizing disc for $\Gamma$ exists in the sense of continuous maps and Lebesgue notion of area. However the resulting disc a priori does not satisfy any regularity at the possibly large preimage of $\Gamma$ beyond continuouity and does not quite fit into the classical setting as it might not be a Sobolev map. Using results for Plateau's problem in singular metric spaces given by Alexander Lytchak and Stefan Wenger in \cite{LW17a} we are able to obtain the following result. 
\begin{Thm}
\label{thm1}
Let $\Gamma$ be a closed rectifiable curve in $\R^n$ possibly having self-intersections. Then there exists $p>2$ and a Sobolev disc $u\in W^{1,p}(\Do,\R^n)$ spanning~$\Gamma$ of least area among all discs $v\in W^{1,2}(\Do,X)$ spanning $\Gamma$. One may choose $u$ locally $\alpha$-Hölder continuous on $\Do$ and globally $\beta$-Hölder continuous on $\Dc$ where $\alpha=\frac{1}{3}$ and $\beta=\frac{1}{27}$.
\end{Thm}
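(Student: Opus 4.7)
The plan is to reduce the theorem to the Lytchak--Wenger solution of Plateau's problem for Jordan curves in metric spaces~\cite{LW17a}. The obstruction to applying that result directly is that $\Gamma$ need not be Jordan, so I would first build an auxiliary complete metric space $X$ in which $\Gamma$ lifts to a genuine Jordan curve $\tilde\Gamma$, solve Plateau's problem there, and project the solution back to $\R^n$.

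\textbf{The auxiliary space.} Let $c\colon\K\to\R^n$ be a parametrization of $\Gamma$ by arclength, so that $c$ is $1$-Lipschitz when $\K$ is given the metric of circumference $\ell(\Gamma)$. For $\varepsilon>0$ define $X_\varepsilon$ as the quotient length space
\[
\bigl(\R^n\sqcup(\K\times[0,\varepsilon])\bigr)\big/\bigl((t,\varepsilon)\sim c(t)\bigr),
\]
with the Euclidean metric on $\R^n$ and the product metric on $\K\times[0,\varepsilon]$. Then $\R^n$ embeds isometrically into $X_\varepsilon$, the retraction $\pi_\varepsilon\colon X_\varepsilon\to\R^n$ that collapses the cylinder via $(t,s)\mapsto c(t)$ is $1$-Lipschitz, and $\tilde\Gamma_\varepsilon:=\K\times\{0\}$ is a rectifiable Jordan curve of length $\ell(\Gamma)$ with $\pi_\varepsilon(\tilde\Gamma_\varepsilon)=\Gamma$. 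Applying~\cite{LW17a} to $\tilde\Gamma_\varepsilon\subset X_\varepsilon$ yields an area minimizing $\tilde u_\varepsilon\in W^{1,2}(\Do,X_\varepsilon)$ spanning $\tilde\Gamma_\varepsilon$, together with $\tilde u_\varepsilon\in W^{1,p}(\Do,X_\varepsilon)$ for some $p>2$, locally $\tfrac13$-Hölder on $\Do$ and globally $\tfrac{1}{27}$-Hölder on $\Dc$. One needs the exponents and constants to be universal for rectifiable Jordan curves of length $\ell(\Gamma)$ and hence independent of $\varepsilon$.

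\textbf{Projection and minimality.} Set $u_\varepsilon:=\pi_\varepsilon\circ\tilde u_\varepsilon$. Since $\pi_\varepsilon$ is $1$-Lipschitz, $u_\varepsilon$ inherits the Sobolev and Hölder regularity of $\tilde u_\varepsilon$, and $u_\varepsilon$ spans $\Gamma$. Passing to a subsequence by Arzelà--Ascoli and weak compactness one obtains a limit $u\in W^{1,p}(\Do,\R^n)$ with the announced regularity. To see that $u$ is area minimizing, take any competitor $v\in W^{1,2}(\Do,\R^n)$ spanning~$\Gamma$, choose a weakly monotone boundary lift $\phi\colon\partial\Do\to\K$ with $c\circ\phi=v|_{\partial\Do}$, and construct a lift $\tilde v_\varepsilon\in W^{1,2}(\Do,X_\varepsilon)$ by placing a rescaled copy of $v$ on $\{|z|\le 1-\delta\}$ and, on the collar $\{1-\delta\le|z|\le 1\}$, mapping via $(r,\theta)\mapsto(\phi(\theta),(1-r)\varepsilon/\delta)$ into the cylinder. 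The cylinder portion contributes at most $\ell(\Gamma)\cdot\varepsilon$ to the parametrized area, so
\[
\tn{area}(u_\varepsilon)\le\tn{area}(\tilde u_\varepsilon)\le\tn{area}(\tilde v_\varepsilon)\le\tn{area}(v)+\ell(\Gamma)\cdot\varepsilon.
\]
Lower semicontinuity of area under the limit $\varepsilon\to 0$ then gives $\tn{area}(u)\le\tn{area}(v)$, as required.

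\textbf{Main obstacle.} The delicate part is the lifting step: showing $\tilde v_\varepsilon$ is a genuine $W^{1,2}$ disc spanning $\tilde\Gamma_\varepsilon$ whose extra area stays $O(\varepsilon)$. This requires extracting the weakly monotone lift $\phi$ with the right regularity from the trace $v|_{\partial\Do}$ (using that any weakly monotone parametrization of $\Gamma$ factors through $c$) and guaranteeing enough $H^{1/2}$-control on $\partial\Do$ to make the collar extension $W^{1,2}$. A parallel obstacle is establishing that the Lytchak--Wenger regularity exponents and constants are uniform in $\varepsilon$, so that $p>2$ and the Hölder exponents $\alpha=\tfrac13$, $\beta=\tfrac{1}{27}$ transfer to the limit $u$; this depends on the precise form of the estimates in~\cite{LW17a} and on their dependence only on $\ell(\tilde\Gamma_\varepsilon)$ and not on the geometry of $X_\varepsilon$ at scale $\varepsilon$.
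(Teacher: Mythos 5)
Your overall strategy is the paper's: glue a cylinder to $\R^n$ along $\Gamma$ so that the free boundary circle becomes a Jordan curve in the glued space, solve Plateau's problem there via \cite{LW17a}, and push the solution back with the $1$-Lipschitz retraction collapsing the cylinder. But the specific implementation --- a cylinder of small height $\varepsilon$ followed by a limit $\varepsilon\to 0$ --- breaks down exactly at the two points you flag as ``obstacles'', and these are not technicalities. First, the uniformity in $\varepsilon$ is genuinely false: if $\Gamma$ has a self-intersection $c(t_1)=c(t_2)$ with $t_1\neq t_2$, the corresponding points of $\tilde\Gamma_\varepsilon$ are at distance at most $2\varepsilon$ in $X_\varepsilon$ while their distance along $\tilde\Gamma_\varepsilon$ is bounded below, so the chord-arc constant of $\tilde\Gamma_\varepsilon$ blows up like $\ell(\Gamma)/\varepsilon$. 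The boundary regularity of \cite{LW16}/\cite{LW17a} (theorem~\ref{thm6}) gives $\beta=\alpha/(1+2\lambda)^2$ and a Sobolev exponent depending on $\lambda$, so neither the global H\"older exponent nor the global $W^{1,p}$ bound survives the limit; in particular $\beta=\tfrac{1}{27}$ arises precisely from $\lambda=1$, which forces the cylinder height to be comparable to $\ell(\Gamma)$, not $o(1)$. Second, without equicontinuity up to $\partial\Do$ uniform in $\varepsilon$ you cannot guarantee that the limit $u$ of $u_\varepsilon$ still spans $\Gamma$: the traces may degenerate, which is exactly the failure of the direct method for singular curves that makes this problem nontrivial in the first place. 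Third, in the competitor-lifting step the boundary trace of $v$ is $\bar\gamma\circ R$ with $R$ only monotone and continuous, so your collar map $(r,\theta)\mapsto(\phi(\theta),(1-r)\varepsilon/\delta)$ need not be $W^{1,2}$; this gap is real and needs an independent argument.

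The paper's proof is arranged so that none of these issues arise. One glues a cylinder $S\times[0,l]$ of height $l=l(\Gamma)$; then the far curve $\Gamma_l$ is a $1$-chord-arc curve (lemma~\ref{lem2}), the glued space satisfies a quadratic isoperimetric inequality with an explicit constant, and theorem~\ref{thm6} yields the fixed exponents $\alpha=\tfrac13$, $\beta=\tfrac1{27}$ for the minimizer $v$ in $X_\Gamma$ --- no limit is taken. Minimality is exact rather than approximate: the competitor bound $\tn{Fill}(\Gamma_l)\leq\tn{Fill}(\Gamma)+l^2$ is obtained not by lifting arbitrary competitors but by combining theorem~\ref{thm7} (which replaces $\tn{Fill}(\Gamma)$ by $\tn{Fill}(\gamma)$ for the constant speed parametrization, circumventing your monotone-reparametrization problem) with the gluing lemma~\ref{lem1} applied to the Lipschitz cylinder homotopy of area $l^2$; conversely, since $v$ is continuous on $\Dc$ and spans $\Gamma_l$, its image contains the whole cylinder, and Lusin's property (N) plus the area formula give $\tn{Area}(P_\Gamma\circ v)\leq\tn{Area}(v)-l^2$. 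The two $l^2$ terms cancel, giving $\tn{Area}(u)=\tn{Fill}(\Gamma)$ in one step. If you want to salvage your version you would have to prove boundary estimates independent of the degenerating chord-arc constant, which is essentially as hard as the original problem; the lesson is that the height of the glued strip should be taken large (of order $l(\Gamma)$) precisely so that the auxiliary Jordan curve is uniformly chord-arc.
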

Here by a \textit{closed curve} $\Gamma$ we mean an equivalence class of parametrized closed curves $\gamma:\K \rightarrow \R^n$ where we identify two parametrized curves if they the same constant speed parametrization up to isometry of $\K$. We say that a Sobolev disc $u \in W^{1,2}(\Do,\R^n)$ \textit{spans} $\Gamma$ if $\tn{tr}(u)\in L^2(\K, \R^n)$ has a continuous representative which is an element of $\Gamma$. The area of $u \in W^{1,2}(\Do,\R^n)$ is given by
\[
\tn{Area}(u):=\int_{\Do} |u_x\wedge u_y|\ \tr x \ \tr y.
\]
Theorem~\ref{thm1} is not only interesting for self-intersecting curves but also seems to be new for rectifiable Jordan curves of low regularity. Let $\Gamma$ be a rectifiable Jordan curve in $\R^n$ and $u\in W^{1,2}(\Do,\R^n)$ be a disc of least area spanning $\Gamma$ classically obtained by minimizing the Dirichlet energy. Then $u$ satisfies Laplace equation and hence is smooth on $\Do$ which is much stronger than local Hölder continuouity, see \cite{DHS10}. If $\Gamma$ furthermore satisfies a $\lambda$-chord-arc condition, then $u$ is known to be globally $\beta$-Hölder continuous for some $\beta \in (0,1)$ depending on $\lambda$, see \cite{HvdM99}. If $n=3$ and $\Gamma$ is $C^1$ then one may take any $\beta<\frac{1}{2}$ by \cite[p.238]{Nit65}. Imposing higher regularity on $\Gamma$ one may gain also higher boundary regularity of $u$, see for example \cite{Nit69}, \cite{War70}. However for a general rectifiable Jordan curve $\Gamma$ in $\R^n$ the only boundary regularity of $u$ that seems to be known is $u\in C^0(\Dc,\R^n)$.\par 
The idea of proof is to replace $\R^n$ by the metric space $X$ obtained by attaching a collar to $\R^n$ along $\Gamma$. Now we use the results in \cite{LW17a} to solve Plateau's problem for a certain regular Jordan curve in $X$. A simple projection argument completes the proof. This way we trade in the singularity of the original curve $\Gamma$ for the singularity of $X$.
\subsection{Sketch of proof}
\label{1.2}
Giving the 'right' definitions, Plateau's problem can almost verbatim be asked not only in $\R^n$ but for a curve $\Gamma$ in a general complete metric space $X$, see section~\ref{2}. If $X$ is proper and $\Gamma$ a Jordan curve, Plateau's problem has been solved by Alexander Lytchak and Stefan Wenger in \cite{LW17a}. This result has been generalized to Jordan curves in many locally non-compact spaces by Chang-Yu Guo and Stefan Wenger in \cite{GWar}. To obtain regularity of the maps they additionally assume the space to satisfy a quadratic isoperimetric inequality. A metric space $X$ is said to satisfy a \textit{$C$-quadratic isoperimetric inequality} if for every parametrized Lipschitz curve $\gamma:\K \rightarrow X$ there exists $u \in W^{1,2}(\Do,X)$ such that $\tn{tr}(u)=\gamma$ and
\[
\tn{Area}(u)\leq C \cdot l(\gamma)^2.
\]
Instead of cutting the curve as Hass does in \cite{Has91} the idea of proving theorem~\ref{thm1} is to resolve the self-intersections by gluing a collar. Assume for simplicity $\Gamma$ is a closed curve of length $2\pi$ in $\R^n$. Let $\gamma:\K\rightarrow \R^n$ be a constant speed parametrization of $\Gamma$ and $X_\Gamma$ the metric space obtained by gluing a strip $\K \times [0,2\pi]$ to $\R^n$ along $(P,0)\sim \gamma(P)$. Then $X_\Gamma$ satisfies a quadratic isoperimetric inequality and the curve $\Gamma_{2\pi}$ corresponding to $\K \times \{2\pi\}$ is a Jordan curve in $X_\Gamma$ satisfying a chord-arc condition. So the results in \cite{LW17a} give a 'nice' disc $v$ spanning $\Gamma_{2\pi}$ of least area within the metric space $X_\Gamma$. Now there is a canonical $1$-Lipschitz retraction $P:X_\Gamma\rightarrow \R^n$. Then $P\circ v$ gives the desired disc spanning $\Gamma$. The details may be found in section~\ref{4.1}.\par
This proof is not very specific to $\R^n$. It gives the following more general result. 
\begin{Thm}
\label{thm2}
Let $X$ be a proper metric space satisfying a $C$-quadratic isoperimetric inequality and $\Gamma$ a closed rectifiable curve in $X$. Then there exists $p=p(C)>2$ and a disc $u\in W^{1,p}(\Do,X)$ of least area spanning $\Gamma$.
\end{Thm}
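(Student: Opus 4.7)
The plan is to carry out for a general $X$ exactly the strategy sketched in subsection~\ref{1.2}. After a rescaling we may assume $\Gamma$ has length $2\pi$ with constant-speed parametrization $\gamma: \K \to X$. Form the glued space $X_\Gamma := X \sqcup (\K \times [0, 2\pi]) / \sim$ where $(\theta, 0) \sim \gamma(\theta)$, equipped with the induced quotient length metric, and let $\Gamma_{2\pi} := \K \times \{2\pi\} \subset X_\Gamma$ and $P : X_\Gamma \to X$ be the obvious $1$-Lipschitz retraction collapsing the strip onto $\Gamma$.

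The first step is to verify three structural properties of $X_\Gamma$: it is proper (since gluing a compact strip to a proper space preserves properness); $\Gamma_{2\pi}$ is a Jordan curve satisfying a chord-arc condition (being isometric to a round circle of length $2\pi$ sitting in the product strip); and $X_\Gamma$ satisfies a $C'$-quadratic isoperimetric inequality with $C' = C'(C)$. The last property is the one that requires real work: one decomposes an arbitrary Lipschitz loop in $X_\Gamma$ into sub-loops lying in $X$ and in the collar respectively, filling the former using the QII of $X$ and the latter using the explicit product geometry of $\K \times [0, 2\pi]$. With these properties in hand, the Lytchak--Wenger theorem yields a disc $v \in W^{1,p}(\Do, X_\Gamma)$ of least area spanning $\Gamma_{2\pi}$, for some $p = p(C') > 2$; set $u := P \circ v \in W^{1,p}(\Do, X)$, which spans $\Gamma$ since $P$ sends $(\theta, 2\pi) \mapsto \gamma(\theta)$.

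The remaining step, and the main obstacle, is to show that $u$ minimizes area among all $W^{1,2}$ discs spanning $\Gamma$. Write $A_0$ for the area of the collar. One proves two matching inequalities: (i) given any competitor $w \in W^{1,2}(\Do, X)$ spanning $\Gamma$, extend it to $\tilde w \in W^{1,2}(\Do, X_\Gamma)$ spanning $\Gamma_{2\pi}$ with $\tn{Area}(\tilde w) = \tn{Area}(w) + A_0$, by placing a conformally rescaled copy of $w$ on an inner disc and a homeomorphic conformal parametrization of the collar on the complementary annulus; and (ii) the minimizer $v$ satisfies $\tn{Area}(v) \geq \tn{Area}(u) + A_0$. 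The delicate point is (ii): since $P$ collapses the collar onto the one-dimensional set $\Gamma$, the Jacobian of $P \circ v$ vanishes on $v^{-1}(X_\Gamma \setminus X)$, and (ii) reduces to showing that the area with multiplicity of $v$ restricted to the preimage of the collar is at least $A_0$. This is a Brouwer degree argument: since $v$ is $W^{1,p}$ with $p > 2$ it has a continuous representative, its trace parametrizes $\Gamma_{2\pi}$ with topological degree one, and hence $v$ has Brouwer degree one at every interior point of the collar; the area formula then yields the required bound. Combining (i) applied to an arbitrary $w$ with (ii), using the minimality of $v$ in the form $\tn{Area}(v) \leq \tn{Area}(\tilde w) = \tn{Area}(w) + A_0$, gives $\tn{Area}(u) \leq \tn{Area}(v) - A_0 \leq \tn{Area}(w)$, completing the proof.
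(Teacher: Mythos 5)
Your overall architecture coincides with the paper's proof (via Theorem~\ref{thm9}): glue the flat collar along the constant-speed parametrization, solve the Jordan-curve Plateau problem for the top circle in $X_\Gamma$ using Theorems~\ref{thm5} and~\ref{thm6}, project back with the $1$-Lipschitz retraction, and do the area bookkeeping against the collar area $l^2$. Your step (ii) is in substance the paper's inequality \eqref{eq10}: the continuous representative of $v$ covers the collar for topological reasons, and the area formula together with Lusin's property (N) (supplied by Theorem~\ref{thm4}) gives $\tn{Area}(P_\Gamma\circ v)\leq \tn{Area}(v)-l^2$.

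The genuine gap is in your step (i). A competitor $w$ spanning $\Gamma$ only has a trace which is \emph{some} continuous parametrization $\eta$ of $\Gamma$, in general not the constant-speed one: $\eta=\bar\gamma\circ R$ with $R$ monotone but possibly far from absolutely continuous. Your extension $\tilde w$ attaches a conformally parametrized collar, whose inner boundary trace is the constant-speed $\gamma$ (up to a rotation), to $w$ along a circle; since the two traces along that circle do not agree, the concatenated map is not in $W^{1,2}$ across it, and the obvious repair --- inserting a shear annulus interpolating between $\eta$ and $\gamma$ --- need not yield a Sobolev map of controlled area, precisely because $R$ can be non-absolutely-continuous. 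This reparametrization problem is exactly the content of Theorem~\ref{thm7} (\cite{LW18a}, Lemma~4.8), namely $\tn{Fill}(\gamma)=\tn{Fill}(\Gamma)$, which the paper combines with the gluing Lemma~\ref{lem1} (applied only to discs whose trace is exactly $\gamma$) to get $\tn{Fill}(\Gamma_l)\leq\tn{Fill}(\Gamma)+l^2$; the paper explicitly flags that result as nontrivial, so your step (i) needs either that citation or a genuinely new argument. A secondary underestimate: the quadratic isoperimetric inequality for $X_\Gamma$ does not follow from the naive loop decomposition you sketch. When $\Gamma$ has self-intersections, the arcs of the gluing circle used to close up the sub-loops have length controlled only by distances in $S$, which are not controlled by distances in $X_\Gamma$, so the total length of the pieces is not bounded by a fixed multiple of the length of the original loop; the paper defers this point, Lemma~\ref{lem2}(\ref{en4}), to \cite{Crearb}. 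Both gaps are fillable by citation, but as written neither step is proved.
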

Theorem~\ref{thm2} may be generalized to the class of spaces which are \textit{$1$-complemented in some ultracompletion} considered in \cite{GWar}. It includes among others Hadamard spaces, dual Banach spaces, injective spaces and $L^1$-spaces. The proof is the same upon replacing the \cite{LW17a} results by the \cite{GWar} ones. We will only give the proof of the locally compact result and indicate where one has to make changes to obtain the more general version.\par
A similar construction of gluing a strip has also turned out to be useful in \cite{Crearb} and \cite{Sta18}.
\subsection{Parametrized version and applications}
\label{1.3}
Let $X$ be a proper metric space and $\Gamma$ a closed curve in $X$. Let 
\[
\tn{Fill}(\Gamma):=\min \{ \tn{Area}(u)| u \tn{ spans } \Gamma\}.
\]
Now let $\gamma:\K \rightarrow X$ be a parametrization of $\Gamma$. A subtle but apparent question is whether there exists $u$ such that $\tn{tr}(u)=\gamma$ and $\tn{Area}(u)=\tn{Fill}(\Gamma)$. In the \cite{Has91} setting the answer is always yes. However the procedure discussed there produces a map $u$ which might not be in $W^{1,2}(\Do,X)$. A positive answer to the question has been given in \cite{LW16} if $\Gamma$ is a Jordan curve satisfying a chord-arc condition and $\gamma$ a Lipschitz parametrization. We use this result and the same trick as in the proof of theorem~\ref{thm2} to obtain the following.
\begin{Thm}
\label{thm3}
Let $X$ be a proper metric space satisfying a $C$-quadratic isoperimetric inequality and $\Gamma$ a closed rectifiable curve in $X$. If $\gamma:\K \rightarrow X$ is a Lipschitz parametrization of $\Gamma$, then there exists a disc $u \in W^{1,p}(\Do,X)$ such that $\tn{tr}(u)=\gamma$ and $\tn{Area}(u)=\tn{Fill}(\Gamma)$ for some $p>2$ depending only on $C$.
\end{Thm}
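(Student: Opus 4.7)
The plan is to adapt the argument for Theorem~\ref{thm2} (section~\ref{4.1}) but to keep track of the parametrization so that composing with the collar projection produces a disc whose trace equals $\gamma$ literally, rather than merely an element of the equivalence class $\Gamma$. Concretely, I would build the collared space $X_\Gamma$ exactly as in the sketch preceding Theorem~\ref{thm2}: after rescaling $\K$ so that the given Lipschitz parametrization $\gamma$ becomes $1$-Lipschitz, glue the flat strip $\K\times[0,L]$ to $X$ along $(P,0)\sim\gamma(P)$ and equip $X_\Gamma$ with the quotient length metric. As established in section~\ref{4.1}, the resulting $X_\Gamma$ is proper, satisfies a $C'$-quadratic isoperimetric inequality with $C'=C'(C)$, the outer boundary $\Gamma_L:=\K\times\{L\}$ is a chord-arc Jordan curve in $X_\Gamma$, and the map $P:X_\Gamma\to X$ defined to be the identity on $X$ and $(Q,t)\mapsto\gamma(Q)$ on the strip is a well-defined $1$-Lipschitz retraction.

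Next, observe that $\tilde\gamma:\K\to X_\Gamma$, $\tilde\gamma(\theta):=(\theta,L)$, is a genuine Lipschitz parametrization of the chord-arc Jordan curve $\Gamma_L$ in the intrinsic metric of $X_\Gamma$. Applying the parametrized Plateau theorem of~\cite{LW16} inside $X_\Gamma$ yields a Sobolev disc $v\in W^{1,p}(\Do,X_\Gamma)$ with $p=p(C)>2$, $\tn{tr}(v)=\tilde\gamma$, and $\tn{Area}(v)=\tn{Fill}_{X_\Gamma}(\Gamma_L)$. Setting $u:=P\circ v$, the Lipschitz property of $P$ yields $u\in W^{1,p}(\Do,X)$, and since $P\circ\tilde\gamma=\gamma$ by the very definition of $P$, the composition rule for traces gives $\tn{tr}(u)=\gamma$. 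In particular $u$ spans $\Gamma$, so $\tn{Area}(u)\ge\tn{Fill}(\Gamma)$.

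The reverse inequality $\tn{Area}(u)\le\tn{Fill}(\Gamma)$ is precisely the area comparison carried out for Theorem~\ref{thm2}: on the preimage of the strip, $u=P\circ v$ factors through the $1$-dimensional set $\gamma(\K)$ and contributes nothing to $\tn{Area}(u)$, while any competitor $w\in W^{1,2}(\Do,X)$ for $\Gamma$ lifts to a competitor $\tilde w$ for $\Gamma_L$ of area $\tn{Area}(w)+A_{\tn{strip}}$ by prescribing the canonical parametrization of the flat cylinder on an annular collar of $\partial\Do$; combined with a coarea/degree argument forcing the strip portion of $v$ to have area at least $A_{\tn{strip}}$, this yields the bound. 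The hard part is precisely this final area bookkeeping, but it is entirely inherited from Theorem~\ref{thm2}; the only genuinely new content here is trace tracking, which is clean because $\tilde\gamma$ was designed so that $P\circ\tilde\gamma=\gamma$ on the nose and because~\cite{LW16} delivers a disc whose trace equals the prescribed Lipschitz parametrization literally rather than up to reparametrization.
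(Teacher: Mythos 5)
Your overall strategy is the paper's: attach a collar, solve the parametrized Plateau problem of \cite{LW16} (theorem~\ref{thm8}) for the outer chord-arc circle, push down by the $1$-Lipschitz retraction, and recover minimality by the same area bookkeeping as for theorem~\ref{thm2}/\ref{thm9}. The one genuine difference is where the parametrization is handled. The paper keeps the canonical collar of section~\ref{4.1}, glued along the \emph{constant speed} parametrization, solves the problem with trace the constant speed curve, and only afterwards passes to an arbitrary Lipschitz parametrization $\eta$ by means of \cite[lemma 3.6]{LWYar}, which provides a Lipschitz homotopy of \emph{zero area} between $\eta$ and the constant speed parametrization, glued on via lemma~\ref{lem1}. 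You instead glue the strip along the given parametrization $\gamma$ itself (rescaled to be $1$-Lipschitz), so that $P\circ\tilde\gamma=\gamma$ on the nose and no reparametrization step is needed. This is a legitimate and arguably cleaner route, but note that it does not come for free ``as established in section~\ref{4.1}'': lemma~\ref{lem2} is stated and proved there for the collar attached along the unit-speed map $S\to X$, and while the distance formula \eqref{eq8}, the $1$-Lipschitz retraction, properness, the local isometry on $S\times(0,l]$ and the chord-arc property of the outer circle only use $1$-Lipschitzness of the attaching map (and a choice of strip height comparable to the circumference, which you should fix), the quadratic isoperimetric inequality of the glued space, item \ref{en4} of lemma~\ref{lem2}, is a black-box citation of \cite{Crearb}; in your version you must check that this gluing result applies to an arbitrary $1$-Lipschitz attaching curve rather than an arclength one. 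Two smaller points: your claim that ``any competitor $w$ for $\Gamma$ lifts to a competitor for $\Gamma_L$ of area $\tn{Area}(w)+A_{\tn{strip}}$'' is literally true only for competitors with $\tn{tr}(w)=\gamma$, since the strip's bottom boundary is the specific curve $\gamma$; for general competitors one needs $\tn{Fill}(\gamma)=\tn{Fill}(\Gamma)$, i.e.\ theorem~\ref{thm7} (finiteness of $\tn{Fill}(\gamma)$ coming from the isoperimetric inequality and the Lipschitz property of $\gamma$), exactly as in \eqref{eq9}. And the lower bound for the strip portion of $v$ is obtained in the paper not by a coarea estimate but by surjectivity of $v$ onto $S\times[0,l]$ together with Lusin's property (N) and the area formula \eqref{eq5}, which is available because theorem~\ref{thm8} gives $v\in W^{1,p}$ with $p>2$. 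With these points supplied, your argument closes; what the paper's two-step route buys is that every ingredient is among the stated results, at the cost of the extra homotopy from \cite{LWYar}, while your route buys an exact trace immediately at the cost of reproving lemma~\ref{lem2} for non-constant-speed attaching maps.
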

Especially if a metric space satisfies a $C'$-quadratic isoperimetric for every $C'>C$, then it satisfies a $C$-quadratic isoperimetric inequality. This simplifies various things in the works of Alexander Lytchak and Stefan Wenger. For example one may slightly improve theorem~$1.8$ in \cite{LWYar} and theorem~1.4 in \cite{LW17b} as follows.
\begin{Cor}
\label{cor1}
Let $C>0$ and $(X_n)$ be a sequence of proper, geodesic metric spaces satisfying a $C$-quadratic isoperimetric inequality. If $X_\omega$ is an ultralimit of $(X_n)$, then $X_\omega$ satisfies a $C$-quadratic isoperimetric inequality. 
\end{Cor}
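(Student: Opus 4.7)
The plan is to combine the existing Lytchak--Wenger ultralimit stability (theorem~1.8 in \cite{LWYar}, respectively theorem~1.4 in \cite{LW17b}) with the parametrized Plateau statement Theorem~\ref{thm3}. The cited ultralimit theorems, in their original formulation, establish only that $X_\omega$ satisfies a $C'$-quadratic isoperimetric inequality for every $C'>C$; the role of Theorem~\ref{thm3} will be to close the gap $C'\downarrow C$.

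Concretely, I would first fix a Lipschitz parametrization $\gamma:\K\to X_\omega$ of a closed curve $\Gamma$. Since $X_\omega$ already satisfies, say, a $(C+1)$-quadratic isoperimetric inequality, the appropriate variant of Theorem~\ref{thm3} applies and yields a disc $u\in W^{1,2}(\Do,X_\omega)$ with $\tn{tr}(u)=\gamma$ and $\tn{Area}(u)=\tn{Fill}(\Gamma)$. It then remains to bound $\tn{Fill}(\Gamma)$ by $C\cdot l(\gamma)^2$. But for every $\epsilon>0$, applying the original ultralimit stability with constant $C+\epsilon$ produces some $v_\epsilon\in W^{1,2}(\Do,X_\omega)$ spanning $\Gamma$ with $\tn{Area}(v_\epsilon)\leq (C+\epsilon)\,l(\gamma)^2$. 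Passing to the infimum gives $\tn{Fill}(\Gamma)\leq C\cdot l(\gamma)^2$ and therefore $\tn{Area}(u)\leq C\cdot l(\gamma)^2$, which is exactly the $C$-quadratic isoperimetric inequality. Note that $p$ does not enter: for the QII only $W^{1,2}$ is needed.

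The main obstacle is the hypothesis of Theorem~\ref{thm3}: as stated it requires $X$ proper, whereas an ultralimit of proper geodesic spaces is in general not proper. This is precisely the issue addressed by the remark after Theorem~\ref{thm2}: the same proof goes through verbatim whenever $X$ is $1$-complemented in one of its own ultracompletions, and one has to verify that ultralimits of geodesic spaces lie in this class (using the natural isometric embedding $X_\omega\hookrightarrow (X_\omega)_\omega$ and the diagonal retraction). Modulo this single bookkeeping point, the whole argument reduces to the remark that $\tn{Fill}(\Gamma)$ is actually attained by a map with prescribed Lipschitz trace, which is enough to absorb the $\epsilon$-slack left over by the original ultralimit theorem.
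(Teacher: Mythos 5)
Your mechanism for removing the $\epsilon$ is exactly the paper's: use the known ultralimit stability with constant $C+\epsilon$ only to conclude $\tn{Fill}(\Gamma)\leq C\cdot l(\gamma)^2$, and then use the fact that $\tn{Fill}(\Gamma)$ is \emph{attained} by a disc whose trace is the prescribed Lipschitz parametrization $\gamma$ to witness the $C$-quadratic isoperimetric inequality; this is precisely the observation preceding Corollary~\ref{cor3}, and in the case that $X_\omega$ happens to be proper your argument coincides with the paper's. The gap is the point you dismiss as bookkeeping. The claim that an ultralimit of proper geodesic spaces is $1$-complemented in some ultracompletion is not established by a ``diagonal retraction'': a point of $(X_\omega)^\omega$ is a class of bounded sequences $(y_k)$ with $y_k\in X_\omega$, and the diagonal depends on the chosen representatives $(y_{k,n})_n$ of the $y_k$. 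For instance with $X_n=\R$ and $p_n=0$, the sequences given by $y_{k,n}=1$ for $n\leq k$ and $y_{k,n}=0$ for $n>k$ represent $y_k=0$ for every $k$, yet the diagonal $y_{n,n}\equiv 1$ represents the point $1\neq 0$; the same example defeats the column-wise variant $z_n:=\lim_{\omega,k}y_{k,n}$ (which is available since $X_n$ is proper), because the distance in $(X_\omega)^\omega$ is the iterated limit $\lim_{\omega,k}\lim_{\omega,n}d(y_{k,n},y'_{k,n})$ while the retraction only controls the limits in the opposite order. So the retraction you propose is not well defined, and ultralimits of proper spaces are not among the examples (Hadamard spaces, dual Banach spaces, injective spaces, $L^1$-spaces, and their $1$-Lipschitz retracts) for which \cite{GWar} supplies $1$-complementedness. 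Moreover, even granting such a statement, you would also need the $1$-complemented version of Theorem~\ref{thm8}, including the energy bound, in order to run Theorem~\ref{thm10} in $X_\omega$.

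The paper closes the non-proper case differently, and this is the actual content of its proof: one reruns the proof of theorem~5.1 (resp.\ theorem~1.8) of \cite{LWYar}, replacing theorem~5.2 therein by Theorem~\ref{thm10} applied in the spaces $X_n$, which \emph{are} proper by hypothesis. There Theorem~\ref{thm10} fills the approximating Lipschitz curves $\gamma_n$ in $X_n$ with area exactly $\tn{Fill}(\Gamma_n)\leq C\cdot l(\gamma_n)^2$ --- no chord-arc assumption, hence no loss in the constant --- and with the uniform energy bound \eqref{eq12}, which is what allows these discs to be passed to the ultralimit and yields a filling in $X_\omega$ of area at most $C\cdot l(\gamma)^2$. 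To salvage your route you would have to genuinely prove that $X_\omega$ is $1$-complemented in some ultracompletion (a statement not available in the cited literature); otherwise, apply the Plateau machinery in the $X_n$ rather than in $X_\omega$, as the paper does.
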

\begin{Cor}
\label{cor2}
Let $X$ be a complete, geodesic metric space that is homeomorphic to a $2$-dimensional manifold. Then $X$ satisfies a $C$-quadratic isoperimetric inequality iff for every Jordan curve $\Gamma$ in $X$ there exists a Jordan domain $U\subset X$ bounded by $\Gamma$ such that
\[
\mathcal{H}^2(U)\leq C\cdot l(\Gamma)^2.
\]
\end{Cor}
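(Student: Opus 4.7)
The plan is to deduce Corollary~\ref{cor2} from the already-known statement [LW17b, Theorem~1.4], which gives the equivalence in the weaker form where condition (A) is replaced by ``$X$ satisfies a $C'$-quadratic isoperimetric inequality for every $C'>C$''. The key lever is the closedness of the set of isoperimetric constants, which is itself a short consequence of Theorem~\ref{thm3} and is precisely the remark foreshadowed in the paragraph introducing the two corollaries.

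I would begin by isolating this closedness property as a lemma: if a proper metric space $X$ satisfies a $C'$-quadratic isoperimetric inequality for every $C'>C$, then $X$ already satisfies a $C$-quadratic isoperimetric inequality. To prove it, fix a Lipschitz parametrization $\gamma:\K\to X$ of a rectifiable closed curve $\Gamma$. Applying Theorem~\ref{thm3} under, say, the $(C+1)$-QII hypothesis produces a Sobolev disc $u$ with $\tn{tr}(u)=\gamma$ and $\tn{Area}(u)=\tn{Fill}(\Gamma)$. For each $C'>C$, the $C'$-QII yields some disc spanning $\Gamma$ of area at most $C'\cdot l(\gamma)^2$, hence $\tn{Fill}(\Gamma)\leq C'\cdot l(\gamma)^2$. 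Letting $C'\downarrow C$ gives $\tn{Area}(u)=\tn{Fill}(\Gamma)\leq C\cdot l(\gamma)^2$, which is the required witness for $C$-QII.

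Next I would reduce Corollary~\ref{cor2} to the lemma and to [LW17b, Theorem~1.4]. The hypothesis that $X$ is a complete geodesic metric space homeomorphic to a $2$-manifold makes $X$ locally compact, so by Hopf--Rinow it is proper and Theorem~\ref{thm3} applies to it. The ``only if'' direction is then immediate: $C$-QII trivially implies $C'$-QII for all $C'>C$, and [LW17b, Theorem~1.4] supplies, for each Jordan curve $\Gamma$, a Jordan domain $U$ bounded by $\Gamma$ with $\mathcal{H}^2(U)\leq C\cdot l(\Gamma)^2$. For the ``if'' direction, the Jordan domain area bound together with [LW17b, Theorem~1.4] gives $C'$-QII for every $C'>C$, and the closedness lemma upgrades this to $C$-QII.

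The main point to verify carefully is that [LW17b, Theorem~1.4] indeed yields precisely the ``$C'>C$'' form of the equivalence above, i.e.\ that the only loss in the constant in that theorem is an arbitrarily small one; this is a matter of inspecting the statement rather than reproving it. Beyond this, no real obstacle arises: Theorem~\ref{thm3} and the Lytchak--Wenger result carry the entire technical load, and the present argument is simply their combination.
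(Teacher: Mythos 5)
Your proposal is correct and follows essentially the same route as the paper: the paper also deduces the corollary by combining theorem~1.4 of \cite{LW17b} with the closedness of isoperimetric constants, which it obtains from the parametrized Plateau result (theorem~\ref{thm10}, of which your use of theorem~\ref{thm3} is the $l_0=\infty$ special case). Your added details (properness of $X$ via local compactness and Hopf--Rinow, and the explicit limiting argument $C'\downarrow C$) simply spell out what the paper leaves implicit.
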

\section{Reminder: Sobolev maps}
\label{2}
In this section we give a short reminder on metric space valued Sobolev maps. For more details see for example \cite{LW17a}, \cite{KS93} and \cite{Res97}.\par 
Let $\Omega\subset \R^2$ a bounded domain, $X$ a separable complete metric space and $p>1$. A measurable, essentially separably valued map $u:\Omega \rightarrow X$ belongs to $W^{1,p}(\Omega,X)$ if for every $1$-Lipschitz map $g:X\rightarrow \R$ the composition $g\circ f$ belongs to the classical Sobolev space $W^{1,p}(\Omega,\R)$ and its \textit{Reshtnyak p-energy}
\begin{equation}
\label{eq1}
E^p_+(u):=\inf \left\{||g||^p_{L^p(\Omega)}\ \Big|\ g \in L^p(\Omega):\forall f\in \tn{Lip}_1(X,\R): |\nabla (f\circ u)|\leq g\tn{ a.e.}\right\}
\end{equation}
is finite. We say that $u:\Omega \rightarrow X$ belongs to $W^{1,p}_{\tn{loc}}(\Omega,X)$ if for every $\Omega'\subset \Omega$ precompact one has $u\in W^{1,p}(\Omega',X)$.\par 
If $\Omega$ is a Lipschitz domain, then for $u\in W^{1,p}(\Omega,X)$ there is a canonical almost everywhere defined \textit{trace} map $\tn{tr}(u)\in L^p(\partial \Omega, X)$. If $u$ extends continuously to a map $\bar{u}:\bar{\Omega}\rightarrow X$, then one may simply take $\tn{tr}(u)=\bar{u}_{|\partial \Omega}$.\par 
If $u\in W^{1,p}(\Omega,X)$, then $u$ is approximately metrically differentiable almost everywhere. That is for almost every $p \in \Omega$ there exists a seminorm $\tn{apmd}_p(u)$ on $\R^2$ such that
\begin{equation}
\label{eq2}
\tn{aplim}_{q\rightarrow p} \frac{d(u(p),u(q))-\tn{apmd}_p (q-p)}{|q-p|}=0.
\end{equation}
Define the \textit{(Busemann) area} of $u \in W^{1,p}(\Omega,X)$ by
\begin{equation}
\label{eq3}
\tn{Area}(u):=\int_\Omega J(\tn{apmd}_p(u))\ \tr \Leb^2(p).
\end{equation}
where for a seminorm $\sigma$ on $\R^2$ one sets its \textit{(Busemann) Jacobian} to be
\begin{equation}
\label{eq4}
J(\sigma):=\frac{\pi}{\Leb^2(\{v\in \R^2|\sigma(v)\leq 1\})}.
\end{equation}
This equals the usual definition $\tn{Area}(u):=\int_\Omega |u_x\wedge u_y|\ \tr x \tr y$ in case $X=\R^n$. We say that a map $u:\Omega\rightarrow X$ satisfies \textit{Lusin's property (N)} if for every $N\subset \Omega$ with $\Leb^2(N)=0$ one has $\mathcal{H}^2(u(N))=0$. If $u\in W^{1,p}(\Omega,X)$ satisfies Lusin's property (N), then the following variant of the area formula holds
\begin{equation}
\label{eq5}
\tn{Area}(u)=\int_X \tn{card}( u\inv (x)) \ \tr \mathcal{H}^2(x).
\end{equation}
A map $u\in W^{1,2}(\Omega,X)$ is \textit{$Q$-quasi-conformal} if for $\Leb^2$-almost every $p \in \Omega$ one has 
\begin{equation}
(\tn{ap md}_p u )(v)\leq Q \cdot (\tn{ap md}_p u )(w)
\end{equation}
for all $v,w \in \K$. If $u$ is $Q$-quasi-conformal, then one has
\begin{equation} 
\tn{Area}(u)\leq E^2_+(u)\leq Q^2\cdot \tn{Area}(u).
\end{equation}
Also the following variant of the Sobolev embedding theorem holds.
\begin{Thm}
\label{thm4}
Let $p>2$, $\alpha=1-\frac{2}{p}$ and $u\in W^{1,p}_{\tn{loc}}(\Omega,X)$. Then $u$ has a representative $\bar{u}\in C^\alpha_{\tn{loc}}(\Omega,X)$ that satisfies Lusin's property (N). If furthermore $\Omega$ is a Lipschitz domain, then
\begin{equation}
||\bar{u}||_{\alpha}=\sup_{x,y \in \Omega} \frac{d(\bar{u}(x),\bar{u}(y))}{|x-y|^\alpha}\leq K(p, \Omega)\cdot \left(E^p_+(u)\right)^{\frac{1}{p}}.
\end{equation}
So especially $W^{1,p}(\Omega,X)\subset  C^\alpha(\bar{\Omega},X)$.
\end{Thm}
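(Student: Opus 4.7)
The plan is to bootstrap from the classical real-valued Sobolev embedding by post-composing $u$ with $1$-Lipschitz test functions and reassembling via the Kuratowski embedding. By the very definition of $E^p_+(u)$, for every $1$-Lipschitz $f\colon X\to\R$ the composition $f\circ u$ lies in $W^{1,p}(\Omega,\R)$ with $|\nabla(f\circ u)|\le g$ a.e.\ for some $g\in L^p(\Omega)$ whose norm is arbitrarily close to $E^p_+(u)^{1/p}$. Since $p>2$, classical Morrey embedding yields a continuous representative of $f\circ u$ satisfying
\[
|f(u(x))-f(u(y))|\le K(p,\Omega)\,|x-y|^{\alpha}\,(E^p_+(u))^{1/p},\qquad \alpha=1-\tfrac{2}{p},
\]
and the localized version on balls gives, for $x,y\in B(x_0,r)\subset\Omega$, the estimate $|f(u(x))-f(u(y))|\le K r^\alpha \bigl(\int_{B(x_0,5r)} g^p\bigr)^{1/p}$.

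To assemble $\bar u$, fix a countable dense family $\{y_k\}$ in an essential image of $u$ and set $f_k(\cdot):=d(\cdot,y_k)$. Each $f_k\circ u$ has a Hölder continuous representative $\varphi_k$, and on a common full-measure set $A\subset\Omega$ the identities $\varphi_k(x)=d(u(x),y_k)$ hold simultaneously for all $k$. Since $z\mapsto (d(z,y_k))_k$ is an isometric embedding of $\overline{\{y_k\}}$ into $\ell^\infty$, the joint map $\Phi(x):=(\varphi_k(x))_k$ is $\alpha$-Hölder on $A$ with $\|\Phi\|_\alpha\le K(p,\Omega)(E^p_+(u))^{1/p}$. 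By completeness of $X$ and density of $\{y_k\}$ in the essential image, $\Phi$ extends uniquely to a continuous map on $\Omega$ (resp.\ $\Dc$ when $\Omega$ is Lipschitz) which actually takes values in $X$, producing the desired $\bar u\in C^\alpha(\Omega,X)$ with the stated norm bound.

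For Lusin's property (N), fix a null set $N\subset\Omega$ and $\varepsilon>0$. Enclose $N$ in an open set $U$ with $\Leb^2(U)<\varepsilon$ and choose a Vitali covering $N\subset\bigcup_i B(x_i,r_i)$ with $B(x_i,5r_i)\subset U$ and bounded overlap of the dilated balls $B(x_i,5r_i)$. The local estimate from the first paragraph applied to $\bar u$ gives $\tn{diam}(\bar u(B(x_i,r_i)))\le K r_i^\alpha \bigl(\int_{B(x_i,5r_i)} g^p\bigr)^{1/p}$. Since $\alpha=1-\tfrac{2}{p}$ forces $\tfrac{2\alpha p}{p-2}=2$, Hölder's inequality yields
\[
\mathcal{H}^2(\bar u(N))\le \sum_i \tn{diam}(\bar u(B(x_i,r_i)))^2\le K^2\Bigl(\sum_i\int_{B(x_i,5r_i)} g^p\Bigr)^{2/p}\Bigl(\sum_i r_i^2\Bigr)^{(p-2)/p}.
\]
The first factor is controlled by the bounded overlap and $\|g\|_{L^p(U)}$; the second by a multiple of $\varepsilon^{(p-2)/p}$. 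Letting $\varepsilon\to 0$ gives $\mathcal{H}^2(\bar u(N))=0$.

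The delicate step is the assembly of $\bar u$: one must pass from Hölder control of each scalar $f_k\circ u$ on its own full-measure set to a single metric-valued Hölder representative. Coherent choice of the common full-measure set, completeness of $X$, and density of $\{y_k\}$ in the essential image are precisely what prevents the continuous extension of $\Phi$ from escaping $X$ into the larger target $\ell^\infty$ supplied by the Kuratowski embedding. The remaining ingredients are routine transcriptions of the scalar Morrey inequality and the Vitali covering argument.
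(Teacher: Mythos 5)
Your argument is essentially correct, but note that the paper itself does not prove Theorem~\ref{thm4} at all: it is stated in the ``Reminder'' section as a known result, with the proof delegated to the cited literature (\cite{LW17a}, \cite{KS93}, \cite{Res97}). What you have written is, in substance, a reconstruction of the standard proof from those references: a single $L^p$ function $g$ dominating $|\nabla(f\circ u)|$ for all $1$-Lipschitz $f$, the classical Morrey estimate applied to the countable family $f_k=d(\cdot,y_k)$ with $\{y_k\}$ dense in the essential image, the identity $d(u(x),u(y))=\sup_k|f_k(u(x))-f_k(u(y))|$ on a common full-measure set, extension by completeness, and the diameter-of-balls estimate plus a covering argument for Lusin's property (N), where $2\alpha p/(p-2)=2$ is exactly the right scaling. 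Two small technical points deserve repair. First, your map $\Phi(x)=(\varphi_k(x))_k$ need not land in $\ell^\infty$ when the essential image is unbounded; either normalize as in the usual Kuratowski embedding, $z\mapsto(d(z,y_k)-d(y_0,y_k))_k$, or (simpler) skip the embedding entirely and read the H\"older bound for $u|_A$ directly off $\sup_k|\varphi_k(x)-\varphi_k(y)|$, then extend into $X$ by completeness as you do. Second, the Vitali $5r$-covering lemma gives disjointness of the small balls but not bounded overlap of their dilates, which is what your H\"older-inequality step uses; the clean fix is to invoke the Besicovitch covering theorem (balls centered in $N$, contained in $U$, overlap bounded by a dimensional constant) and to use the Morrey diameter estimate on the ball itself, which for a ball requires no dilation. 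With these routine adjustments the proof is complete and matches the argument the paper implicitly relies on.
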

\section{Plateau's problem for Jordan curves}
\label{3}
\subsection{Unparametrized Plateau problem}
\label{3.1}
Consider $\K$ endowed with angular distance. For $p,q \in \K$ let $[p,q]$ be a shortest path in $\K$ connecting $p$ and $q$. We say that a parametrized curve $\gamma:\K \rightarrow X$ is of \textit{constant speed} if there is a constant $c\in [0,\infty)$ such that for every $p,q \in \K$ one has $l(\gamma_{|[p,q]})=c\cdot d_\K(p,q)$. For every rectifiable parametrized curve $\gamma:\K \rightarrow X$ there exists a constant speed curve $\bar{\gamma}:\K\rightarrow X$ and a monotone map $R:\K \rightarrow \K$ such that $\gamma=\bar{\gamma}\circ R$. Such a curve $\bar{\gamma}$ is unique up to precomposing an isometry of $\K$ and will be called the \textit{constant speed parametrization} of $\gamma$.  Having the same constant speed parametrization defines an equivalence relation~$\sim$ on the set of parametrized rectifiable curves $\gamma:\K\rightarrow X$. A \textit{closed rectifiable curve} $\Gamma$ is an equivalence class with respect to $\sim$. We say that $\gamma:\K \rightarrow X$ is a \textit{parametrization} of $\Gamma$ if $\gamma \in \Gamma$.\par
Let $\Gamma$ be a closed rectifiable curve. We say that $u \in W^{1,2}(\Do,X)$ \textit{spans} $\Gamma$ if $\tn{tr}(u)\in L^2(\Do,X)$ has a continuous representative that is a parametrization of $\Gamma$. We define the \textit{filling area} $\tn{Fill}(\Gamma)$ of $\Gamma$ by \[
\tn{Fill}(\Gamma):=\inf\{\tn{Area}(u)|u \tn{ spans } \Gamma\}.
\]
In this formulation \textit{Plateau's problem} for $X$ reads: Is there a disc $u$ spanning $\Gamma$ such that $\tn{Area}(u)=\tn{Fill}(\Gamma)$?\par 
The classical approach to solve Plateau problem for Jordan curves $\Gamma$ in $\R^n$ is to minimize the energy over all discs spanning $\Gamma$. Then it turns out that a minimizer of energy exists and is also a minimizer of area. This breaks down for more general metric spaces as energy minimizers might not be area minimizers and worse for self-intersecting curves as there might not even exist energy minimizers, see \cite{LW17a} resp. \cite{Has91}. Still Alexander Lytchak and Stefan Wenger were able to solve Plateau's problem for Jordan curves in proper metric spaces.
\begin{Thm}[\cite{LW17a}]
\label{thm5}
Let $X$ be a proper metric space and $\Gamma$ a rectifiable Jordan curve in $X$ such that $\tn{Fill}(\Gamma)<\infty$. Then there exists a disc $u\in W^{1,2}(\Do,X)$ of least area spanning $\Gamma$. One may furthermore choose $u$ to minimize $E^2_+(u)$ over all discs of least area spanning $\Gamma$. In this case $u$ is $\sqrt{2}$-quasi-conformal. If $X$ satisfies property~(ET), $u$ is even $1$-quasi-conformal.
\end{Thm}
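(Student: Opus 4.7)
The plan is to follow the Douglas variational method but to minimize the Busemann area directly, using a quasi-conformal reparametrization trick to recover compactness and a three-point normalization to quotient out the Möbius group. Fix $p_1,p_2,p_3\in\K$ and $q_1,q_2,q_3\in\Gamma$ in the same cyclic order, and let $\Lambda$ be the class of spanning discs $v\in W^{1,2}(\Do,X)$ whose continuous trace is a weakly monotone parametrization of $\Gamma$ sending $p_i$ to $q_i$; by the assumption $\tn{Fill}(\Gamma)<\infty$ together with a precomposition by a Möbius transformation, $\Lambda$ is non-empty.

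The crux is a reparametrization lemma: for every spanning disc $v$ and every $\varepsilon>0$ there is a homeomorphism $\phi$ of $\Dc$ such that $\tilde v:=v\circ\phi\in\Lambda$ satisfies $\tn{Area}(\tilde v)=\tn{Area}(v)$ and $E^2_+(\tilde v)\leq 2\,\tn{Area}(v)+\varepsilon$. This is obtained by uniformizing a smoothed version of the pullback seminorm field $p\mapsto\tn{apmd}_p v$ via a measurable Riemann mapping theorem; in general one can only force $\tilde v$ to be $\sqrt 2$-quasi-conformal, corresponding to the worst-case John distortion of an arbitrary norm against the Euclidean one, which is where the final factor $\sqrt 2$ originates. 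Applying this lemma to an area-minimizing sequence yields $\{\tilde v_n\}\subset\Lambda$ with $\tn{Area}(\tilde v_n)\to\tn{Fill}(\Gamma)$ and $E^2_+(\tilde v_n)$ uniformly bounded. A metric Courant-Lebesgue argument then forces the traces $\tn{tr}(\tilde v_n)$ to be equicontinuous parametrizations of $\Gamma$; properness of $X$ together with Korevaar-Schoen compactness for metric-space-valued Sobolev maps yields an $L^2$-limit $u\in W^{1,2}(\Do,X)$. Lower semicontinuity of the Busemann area under $L^2$-convergence with uniformly bounded energy gives $\tn{Area}(u)\leq\tn{Fill}(\Gamma)$, and the equicontinuity shows $u\in\Lambda$; hence $u$ is an area minimizer. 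Re-running the compactness argument on the subclass of area minimizers in $\Lambda$ and exploiting lower semicontinuity of $E^2_+$ produces an area minimizer that also minimizes $E^2_+$ among area minimizers.

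Finally, a first-variation computation shows this $u$ is $\sqrt 2$-quasi-conformal: if on a set of positive measure the John ellipse of $\tn{apmd}_p u$ had ratio exceeding $\sqrt 2$, one could precompose $u$ with a compactly supported homeomorphism of $\Dc$ to obtain an element of $\Lambda$ with the same area (reparametrization preserves Busemann area) but strictly smaller Reshetnyak energy, contradicting minimality of energy among area minimizers. Under property (ET) every $\tn{apmd}_p u$ is a scaled Euclidean norm, and the same variational argument sharpens the conclusion to $Q=1$. The hard part will be the reparametrization lemma, which requires a quasi-conformal uniformization of a possibly very irregular seminorm field on $\Do$ in low regularity; the suboptimality of this uniformization in the absence of (ET) is precisely what produces the constant $\sqrt 2$ rather than $1$.
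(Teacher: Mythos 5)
This theorem is not proved in the paper at all --- it is imported verbatim from \cite{LW17a} --- so the only meaningful comparison is with Lytchak--Wenger's original argument, and your sketch follows that argument essentially step for step: the $\varepsilon$-quasi-conformal reparametrization lemma (measurable Riemann mapping applied to a smoothed version of the metric differential, with the constant $\sqrt{2}$ coming from John's theorem in dimension two), the three-point normalization with a metric Courant--Lebesgue argument, compactness in the proper space, lower semicontinuity, a second minimization of $E^2_+$ among area minimizers, and inner variations giving $\sqrt{2}$- (respectively $1$-) quasi-conformality under property (ET). Do note that, besides the reparametrization lemma you explicitly defer, the lower semicontinuity of the Busemann area under the stated convergence is itself a substantial ingredient of \cite{LW17a} (quasi-convexity of the Busemann integrand), and the precomposition steps need the reparametrizing homeomorphisms to be biLipschitz (as the smoothed uniformization indeed provides) so that membership in $W^{1,2}$ and the invariance of the area are actually preserved.
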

Here a metric space is called \textit{proper} if closed and bounded subsets of $X$ are compact.
A metric space is said to satisfy \textit{property (ET)} if for every $u \in W^{1,2}(\Do,X)$ the metric differential $\tn{apmd} f$ is induced by a (possibly degenerate) inner product $\Leb^2$-almost everywhere on $\Do$. Spaces satisfying property (ET) include complete Riemannian manifolds and spaces satisfying curvature bounds in the sense of Alexandrov, see \cite{LW17a}.\par 
Theorem~\ref{thm5} has been generalized to many locally non-compact spaces including Hadamard spaces, dual Banach spaces and $L^1$-spaces in \cite{GWar}. It holds for all complete metric spaces $X$ that are \textit{$1$-complemented in some ultracompletion}. For convenience of the reader we stick in this article to proper spaces. However all results stated here for proper spaces also hold true within the class of spaces $1$-complemented in some ultra completion.\par 
To obtain regularity of solutions of Plateau's problem one has to impose additional conditions on $X$. Let $C\in [0,\infty)$ and $l_0\in (0,\infty]$. We say that $X$ satisfies a \textit{$(C,l_0)$-quadratic isoperimetric inequality} if for every parametrized Lipschitz curve $\gamma:\K \rightarrow X$ such that $l(\gamma)<l_0$ there exists a disc $u \in W^{1,2}(\Do,X)$ such that $\tn{tr}(u)=\gamma$ and
\[
\tn{Area}(u)\leq C \cdot l(\gamma)^2.
\]
If $l_0=\infty$ we also say that $X$ satisfies a $C$-quadratic isoperimetric inequality. For example $\R^n$ and more generally $\tn{CAT}(0)$ spaces satisfy a $\frac{1}{4\pi}$-quadratic isoperimetric inequality. For proper, geodesic spaces this is an equivalent characterization of the $\tn{CAT}(0)$-condition, see \cite{LW18b}. Banach spaces satisfy a $\frac{1}{2\pi}$-quadratic isoperimetric inequality, see \cite{Creara}.
\begin{Thm}[\cite{LW17a}, \cite{LW16}]
\label{thm6}
Let $X$ be a complete metric space satisfying a $(C,l_0)$-quadratic isoperimetric inequality and $\Gamma$ a rectifiable Jordan curve in $X$. Let $u\in W^{1,2}(\Do,X)$ be a disc of least area spanning $\Gamma$ that is $Q$-quasiconformal. Then
\begin{enumerate}
\item Then $u\in W^{1,p}_{\tn{loc}}(\Do,X)\cap C^\alpha_{\tn{loc}}(\Do,X)$ for $p=p(C,Q)>2$ and $\alpha=(4\pi Q^2 C)\inv$.
\item If $\Gamma$ satisfies a $\lambda$-chord-arc condition, then $u\in W^{1,q}(\Dc,X)\cap C^\beta(\Dc,X)$ where $q=q(C,Q,\lambda)>2$ and $\beta=\frac{\alpha}{(1+2\lambda)^{2}}$. 
\end{enumerate} 
\end{Thm}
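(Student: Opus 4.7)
The plan is to reduce everything to a Morrey-type energy decay estimate on small discs, combined with a self-improvement step à la Gehring. Since $u$ is $Q$-quasi-conformal one has $E^2_+(u|_B) \leq Q^2 \cdot \tn{Area}(u|_B)$ for every disc $B$. The central ingredient will therefore be to compare the area of $u$ on a small disc $B(x,r)\subset \Do$ with the length of its boundary circle, exploiting that $u$ minimizes area among all fillings of its own trace.

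\textbf{Step 1 (interior estimate).} For $x\in\Do$ and $r$ small enough, Fubini combined with the Fuglede--type characterization of Sobolev maps shows that for a.e.\ $\rho\in(r/2,r)$ the restriction $u|_{\partial B(x,\rho)}$ admits a Lipschitz reparametrization whose length $\ell(\rho)$ satisfies $\ell(\rho)^2 \leq 2\pi\rho\int_{\partial B(x,\rho)} (\tn{apmd}\, u)^2\,d\mathcal{H}^1$. The quadratic isoperimetric inequality produces a competitor $v\in W^{1,2}(\Do,X)$ with $\tn{tr}(v)=u|_{\partial B(x,\rho)}$ and $\tn{Area}(v)\leq C\,\ell(\rho)^2$. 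Gluing $v$ in place of $u|_{B(x,\rho)}$ gives an admissible competitor for the original Plateau problem (its trace on $\K$ still parametrizes $\Gamma$), and area minimality forces
\[
E^2_+\!\left(u|_{B(x,\rho)}\right) \leq Q^2 \tn{Area}\!\left(u|_{B(x,\rho)}\right) \leq Q^2 C\,\ell(\rho)^2.
\]
Setting $f(r):=E^2_+(u|_{B(x,r)})$ and inserting the circle estimate on $\ell(\rho)^2$ yields an ODI of the form $f(r)\leq 2\pi Q^2 C \cdot r\,f'(r)$ for a.e.\ $r$. Integrating gives $f(r)\leq K r^{2\alpha}$ with $\alpha = (4\pi Q^2 C)^{-1}$, and Morrey's lemma upgrades this to local $\alpha$-Hölder continuity.

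\textbf{Step 2 (higher integrability).} From the Morrey decay one obtains a reverse Hölder inequality for the energy density of $u$ on concentric discs: the mean of $e(u)$ on $B(x,r/2)$ is controlled by the mean of $e(u)^s$ on $B(x,r)$ for some $s<1$, with the Poincaré/caccioppoli-type input coming again from comparing $u$ with the isoperimetric filling of its boundary trace. Gehring's lemma then delivers $u\in W^{1,p}_{\tn{loc}}$ for some $p=p(C,Q)>2$, and via Theorem~\ref{thm4} this re-packages the Hölder statement cleanly.

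\textbf{Step 3 (boundary regularity).} For a boundary point $z\in\K$, consider half-discs $B(z,r)\cap\Do$. The $\lambda$-chord-arc hypothesis says that arclength along $\Gamma$ is comparable to chordal distance in $X$; together with the Courant--Lebesgue lemma it gives, for a.e.\ $\rho\in(r/2,r)$, a control $\ell(\rho)\leq (1+2\lambda)\,\tn{osc}(u,\partial B(z,\rho)\cap\Do)$. One then runs the same competitor argument as in Step~1, where the competitor now also uses the boundary arc of $\Gamma$ itself, whose length is under control via chord-arc. The resulting ODI has an extra factor $(1+2\lambda)^2$, producing decay with exponent $\beta=\alpha/(1+2\lambda)^2$, from which global Hölder continuity and, via Gehring again, $W^{1,q}(\Dc,X)$ for some $q>2$ follow.

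The hardest step is Step~1: extracting the precise constant $\alpha=(4\pi Q^2 C)^{-1}$ requires a delicate use of Fuglede's theorem and the Hölder/Cauchy--Schwarz inequality $\ell(\rho)^2\leq 2\pi\rho\int_{\partial B}(\tn{apmd}\, u)^2$, together with the observation that area minimality, not merely energy minimality, is what allows one to insert the isoperimetric filling without loss. Quasi-conformality then bridges back from area to energy, so that the comparison closes as an ODI for $f$. The boundary step is conceptually the same, but one must be careful that the isoperimetric filling competitor glues back into an admissible disc spanning $\Gamma$, which is where the chord-arc condition enters.
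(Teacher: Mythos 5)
This theorem is not proved in the paper at all: it is imported as a black box from \cite{LW17a} and \cite{LW16}, so there is no internal proof to compare against. What you have written is, in outline, a reconstruction of the actual Lytchak--Wenger argument from those references, and it is essentially sound: the good-radius choice via Fubini/Courant--Lebesgue, the Cauchy--Schwarz length estimate $\ell(\rho)^2\leq 2\pi\rho\int_{\partial B(x,\rho)}g^2$, the replacement of $u|_{B(x,\rho)}$ by an isoperimetric filling of its own trace (legitimate precisely because $u$ minimizes \emph{area}, not energy), quasi-conformality to pass back from area to energy, the resulting differential inequality $f(\rho)\leq 2\pi Q^2C\,\rho f'(\rho)$ integrating to Morrey decay with exponent $2\alpha$, $\alpha=(4\pi Q^2C)^{-1}$, and the chord-arc modification at the boundary whose extra factor $(1+2\lambda)^2$ in the constant produces exactly $\beta=\alpha/(1+2\lambda)^2$ -- this is the correct mechanism and reproduces the stated exponents.

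A few points deserve more care than your sketch gives them. First, the isoperimetric inequality is only assumed for curves of length $<l_0$, so you must argue that for sufficiently small $r$ a good radius $\rho\in(r/2,r)$ with $\ell(\rho)<l_0$ exists, and you need the inequality for absolutely continuous (not just Lipschitz) boundary curves, which is handled by reparametrization. Second, the competitor must be shown to lie in $W^{1,2}(\Do,X)$ with unchanged trace on $\K$; this is exactly the role of the gluing lemma (Lemma~\ref{lem1}-type statements), and should be invoked explicitly. Third, your phrasing in Step~2 is slightly misleading: Morrey decay by itself does not yield higher integrability; the reverse H\"older inequality comes directly from the competitor estimate combined with choosing a good radius in an annulus (so that $\int_{\partial B(x,\rho)}g\,d\mathcal{H}^1$ is controlled by $r$ times the mean of $g$ over $B(x,r)\setminus B(x,r/2)$), after which Gehring applies; you do mention this input, but the logical order should be corrected. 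Finally, at the boundary you must verify that the trace of $u$ over the small arc $\K\cap B(z,\rho)$ is the \emph{short} subarc of $\Gamma$ between the images of the two endpoints (this uses the chord-arc condition and smallness of $\rho$), since otherwise the length of the concatenated competitor boundary is not controlled by $(1+2\lambda)\ell(\rho)$. With these points filled in, your sketch matches the proof in the cited sources.
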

Here a rectifiable Jordan curve $\Gamma$ is said to satisfy a $\lambda$-\textit{chord-arc} condition where $\lambda \geq 1$ is a constant if its constant speed parametrization $\bar{\gamma}:\K \rightarrow X$ satisfies
\begin{equation}
\label{eq6}
\frac{l(\Gamma)}{2\pi}\cdot d_\K(p,q)\leq \lambda \cdot d(\bar{\gamma}(p),\bar{\gamma}(q))
\end{equation}
for all $p,q \in \K$.
\subsection{Parametrized Plateau's problem}
\label{3.2}
Let $\Gamma$ be a rectifiable curve in $X$ and $\gamma:\K \rightarrow X$ a parametrization of $\Gamma$. We define the \textit{filling area} of $\gamma$ to be
\begin{equation}
\label{eq7}
\tn{Fill}(\gamma):=\inf\{\tn{Area}(u)\ |\ \tn{tr}(u)=\gamma\}.
\end{equation}
Then by definition one has $\tn{Fill}(\Gamma)\leq \tn{Fill}(\gamma)$. A first apparent question is whether equality holds. This cannot be true because even in $\R^n$ as a rectifiable curve will usually have bad parametrizations that do not admit discs spanning them. However this is essentially the only restriction by the following result of Lytchak and Wenger.
\begin{Thm}[\cite{LW18a}, lemma~$4.8$]
\label{thm7}
Let $X$ be a complete metric space satisfying a $(C,l_0)$-quadratic isoperimetric inequality, $\Gamma$ a closed rectifiable curve in $X$ and $\gamma$ a parametrization of $\Gamma$. If $\tn{Fill}(\gamma)<\infty$, then $\tn{Fill}(\gamma)=\tn{Fill}(\bar{\gamma})=\tn{Fill}(\Gamma)$.
\end{Thm}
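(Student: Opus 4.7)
The inequalities $\tn{Fill}(\Gamma)\leq\tn{Fill}(\bar\gamma)$ and $\tn{Fill}(\Gamma)\leq\tn{Fill}(\gamma)$ are immediate from the definitions, since any disc with trace $\bar\gamma$ or $\gamma$ already spans $\Gamma$; only the two reverse inequalities require work. The finiteness hypothesis $\tn{Fill}(\gamma)<\infty$ is genuinely needed, as it excludes monotone reparametrizations of $\Gamma$ so irregular that no Sobolev disc has them as trace (such parametrizations exist already in $\R^n$), and it also supplies a concrete reference disc with trace $\gamma$ whose boundary behaviour can be transplanted into an arbitrary competitor.

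My plan is a boundary-reparametrization argument by gluing. Fix $\varepsilon>0$, pick $u\in W^{1,2}(\Do,X)$ spanning $\Gamma$ with $\tn{Area}(u)\leq\tn{Fill}(\Gamma)+\varepsilon$, set $\eta:=\tn{tr}(u)$, and write $\eta=\bar\gamma\circ R_\eta$, $\gamma=\bar\gamma\circ R_\gamma$ for monotone $R_\eta,R_\gamma:\K\rightarrow\K$. For small $\delta>0$ I would rescale $u$ to the disc of radius $1-\delta$ and fill the remaining annular shell $\K\times[1-\delta,1]$ by a Sobolev map $F$ whose inner trace equals $\eta$ and outer trace equals $\gamma$. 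Gluing produces a Sobolev disc $\tilde u$ with $\tn{tr}(\tilde u)=\gamma$ and $\tn{Area}(\tilde u)\leq\tn{Area}(u)+\tn{Area}(F)$. Sending $\tn{Area}(F)\to 0$ and then $\varepsilon\to 0$ yields $\tn{Fill}(\gamma)\leq\tn{Fill}(\Gamma)$, and the same construction with $\bar\gamma$ in place of $\gamma$ handles $\tn{Fill}(\bar\gamma)$.

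The main obstacle is constructing $F$ with arbitrarily small area. Heuristically this should be free: the natural candidate $F(\theta,t):=\bar\gamma(\phi_t(\theta))$, where $\phi_t$ is a monotone interpolation between $R_\eta$ and $R_\gamma$, has image inside the rectifiable set $\bar\gamma(\K)$, which carries zero two-dimensional Hausdorff measure, so Lusin's property~(N) together with the area formula~\eqref{eq5} forces $\tn{Area}(F)=0$. What is not automatic is that this candidate admits a $W^{1,2}$ representative, since the monotone maps $R_\eta, R_\gamma$ can be as irregular as a Cantor staircase. I would circumvent this by approximating $\eta$ and $\gamma$ uniformly by Lipschitz parametrizations $\eta_n,\gamma_n$ of $\Gamma$, closing them into Lipschitz loops on $\partial(\K\times[0,\delta])$ via short transition arcs along $\bar\gamma$, filling each such loop by a Sobolev map $F_n$ whose area is bounded through the $(C,l_0)$-quadratic isoperimetric inequality (with total length driven to zero as $n\to\infty$ and $\delta\to 0$), and extracting a subsequential limit. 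The hypothesis $\tn{Fill}(\gamma)<\infty$ enters at the limit step to pin the outer trace of the resulting $F$ at $\gamma$ itself, rather than at some other parametrization of $\Gamma$ inherited from the approximation.
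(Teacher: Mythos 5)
You are attempting to reprove a result that this paper does not prove at all: Theorem~\ref{thm7} is quoted from \cite{LW18a} (lemma~4.8), and the text explicitly warns that the proof there is ``quite tricky''. Your easy inequalities $\tn{Fill}(\Gamma)\leq\tn{Fill}(\gamma)$, $\tn{Fill}(\Gamma)\leq\tn{Fill}(\bar\gamma)$ are fine, and you correctly isolate the hard direction: given a near-minimizer $u$ with trace $\eta=\bar\gamma\circ R_\eta$, one must glue in a Sobolev annulus of arbitrarily small area whose inner trace is $\eta$ and whose outer trace is exactly $\gamma$, the obstruction being that $R_\eta$ and $R_\gamma$ may be Cantor-staircase-like, so the naive homotopy through reparametrizations of $\bar\gamma$ need not be Sobolev. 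But the scheme you propose to overcome this does not close the gap; it in fact stalls exactly where the known proof is delicate.

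Concretely, two steps fail as written. First, a single loop assembled from $\eta_n$, $\gamma_n$ and two short transition arcs has length roughly $2\,l(\Gamma)$, which does not tend to zero, so one application of the $(C,l_0)$-quadratic isoperimetric inequality gives area of order $C\,l(\Gamma)^2$, not a small quantity; to make the filling area small you would need to subdivide the annular region into many short quadrilaterals and sum squares of lengths, an argument you do not supply (and which also needs $l_0$-smallness of the pieces). Second, and more seriously, the ``extract a subsequential limit'' step has no justification: an area bound gives no compactness in $W^{1,2}$, the energies of the fillings $F_n$ are not controlled, and even with an energy bound the trace is not continuous under weak $W^{1,2}$ convergence unless one has equicontinuity up to the boundary --- which is precisely what fails for arbitrary rectifiable, possibly self-intersecting $\Gamma$ and irregular parametrizations. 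Asserting that the hypothesis $\tn{Fill}(\gamma)<\infty$ ``pins the outer trace at $\gamma$'' is not an argument; the way finiteness is genuinely usable is different: a finite-area disc $v$ with $\tn{tr}(v)=\gamma$ yields, by restriction to thin annuli $\{r\leq|z|\leq 1\}$, honest Sobolev annuli of arbitrarily small area whose outer trace is exactly $\gamma$ and whose inner trace is the absolutely continuous circle curve $v_r$ --- but then connecting $v_r$ to the irregular trace $\eta$ remains the technical heart, and your sketch does not address it. Note also that for the $\bar\gamma$-part of the statement you have no finite-area reference disc with trace $\bar\gamma$ a priori, so even your own mechanism for fixing the limiting trace is unavailable there. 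As it stands, the proposal identifies the right difficulty but replaces the key construction by an unsupported compactness claim.
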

However the proof of theorem~\ref{thm7} in \cite{LW18a} is quite tricky and does not give the existence of a disc $u$ such that $\tn{tr}(u)=\gamma$ and $\tn{Area}(u)=\tn{Fill}(\gamma)$. The question whether such $u$ exists could be considered a parametrized version of Plateau's problem. The following partial result has been obtained in \cite{LW16}.
\begin{Thm}[\cite{LW16}]
\label{thm8}
Let $X$ be a proper metric space satisfying a $(C,l_0)$-quadratic isoperimetric inequality, $\Gamma$ a rectifiable $\lambda$-chord-arc curve in $X$ and $\gamma$ a Lipschitz parametrization of $\Gamma$. If $\tn{Fill}(\Gamma)<\infty$, then there exists $p=p(C,\lambda)>2$ and a disc $u\in W^{1,p}(\Do,X)$ such that $\tn{tr}(u)=\gamma$ and $\tn{Area}(u)=\tn{Fill}(\Gamma)$. If furthermore $l(\gamma)<l_0$, then one may also achieve
\begin{equation}
\left(E^p_+(u)\right)^{\frac{1}{p}}\leq M \cdot \tn{Lip}(\gamma)  
\end{equation}
where $M=M(C,\lambda)$.
\end{Thm}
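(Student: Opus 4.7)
The plan is to produce $u$ by reparametrizing a least-area disc spanning $\Gamma$ so that its trace becomes $\gamma$ exactly, then to quantify the energy. Theorem~\ref{thm5} yields a $\sqrt{2}$-quasiconformal area-minimizer $v \in W^{1,2}(\Do,X)$ spanning $\Gamma$, and the chord-arc hypothesis combined with Theorem~\ref{thm6}(2) upgrades $v$ to $v \in W^{1,q}(\Dc,X) \cap C^\beta(\Dc,X)$ with $q = q(C,\lambda) > 2$ and $\beta > 0$. Writing $\bar\gamma$ for the constant-speed parametrization of $\Gamma$ (which is bi-Lipschitz by the chord-arc condition), one has $\tn{tr}(v) = \bar\gamma \circ R'$ and $\gamma = \bar\gamma \circ M$ for weakly monotone surjections $R', M: \K \to \K$. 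Since $\gamma$ is Lipschitz, $M$ is Lipschitz; since $v$ is $C^\beta$, $R'$ is $\beta$-Hölder.

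The central step is to build a homeomorphism $\varphi: \Dc \to \Dc$ whose boundary restriction $\psi$ satisfies $R' \circ \psi = M$, and then set $u := v \circ \varphi$, so that $\tn{tr}(u) = \gamma$. When the plateau structures of $R'$ and $M$ coincide, $\psi$ is constructed by pasting $(R')\inv \circ M$ on the complementary arcs. In general one first performs a surgery to eliminate mismatched plateaus: for each arc $I \subset \K$ on which $R'$ is constant while $M$ is not, excise a small half-disc $D_I$ from $\Dc$ along $I$ and fill it by using the $(C,l_0)$-quadratic isoperimetric inequality on the Lipschitz loop obtained by concatenating $\gamma|_I$ with a short constant path; symmetrically for plateaus of $M$ not shared by $R'$. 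After this adjustment the boundary correspondence $\psi$ is a bi-Hölder homeomorphism, and one extends it to a quasiconformal map $\varphi: \Dc \to \Dc$ via a Beurling--Ahlfors type construction whose maximal dilatation is controlled by $C$, $\lambda$, and $\tn{Lip}(\gamma)$.

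The main obstacle is to propagate quantitative regularity through this construction. Composing the $W^{1,q}$ map $v$ with the quasiconformal $\varphi$ lands in $W^{1,p}$ for some $p > 2$ depending only on the dilatation, via Gehring-type higher integrability of $|\nabla \varphi|$; this yields $p = p(C,\lambda)$. The area of $u$ equals that of $v$ away from the surgery strips, whose total area is bounded by $C \cdot \tn{Lip}(\gamma)^2 \cdot \sum_I |I|^2$ and can be made arbitrarily small by shrinking the excised half-discs. Since $u$ is a competitor with $\tn{Area}(u) \geq \tn{Fill}(\gamma) \geq \tn{Fill}(\Gamma) = \tn{Area}(v)$, a limit argument together with Theorem~\ref{thm7} forces $\tn{Area}(u) = \tn{Fill}(\Gamma)$. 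Finally the bound $(E^p_+(u))^{1/p} \leq M \cdot \tn{Lip}(\gamma)$ follows by combining the Beurling--Ahlfors dilatation estimate with the $W^{1,q}$ bound on $v$ and the trivial inequality $l(\gamma) \leq 2\pi \tn{Lip}(\gamma)$.
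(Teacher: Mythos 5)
This theorem is not proved in the paper at all: it is quoted from \cite{LW16}, and the mechanism used there (and echoed in this paper's Lemma~\ref{lem1}) is to glue to an area minimizer a homotopy of \emph{zero area} and controlled energy between its trace and the prescribed Lipschitz parametrization $\gamma$; the gluing lemma then adds areas exactly, so no reparametrization of the domain is needed. Your route through a quasiconformal change of variables $u=v\circ\varphi$ is genuinely different, but it has gaps that I do not see how to close.

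First, the exact boundary matching fails. You need a homeomorphism $\psi$ of $\K$ with $R'\circ\psi=M$, which is impossible whenever the plateau structures of $R'$ and $M$ differ, and your surgery fix abandons exactness: after excising half-discs and filling with the isoperimetric inequality you only obtain discs $u_\epsilon$ with $\tn{tr}(u_\epsilon)=\gamma$ and $\tn{Area}(u_\epsilon)\leq \tn{Fill}(\Gamma)+\epsilon$. The concluding ``limit argument'' then only reproves $\tn{Fill}(\gamma)=\tn{Fill}(\Gamma)$, i.e.\ Theorem~\ref{thm7}; the whole content of Theorem~\ref{thm8} is that the infimum is \emph{attained} by a map with the prescribed trace, and passing to a limit of the $u_\epsilon$ would require uniform $W^{1,p}$ bounds and stability of the trace and of the area under that limit, none of which you establish (indeed your dilatation bounds degenerate as the surgery regions shrink). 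Second, the extension step is unjustified: a bi-H\"older circle homeomorphism need not be quasisymmetric, and Beurling--Ahlfors requires quasisymmetry, so $\varphi$ may not exist; moreover $(R')\inv$ has no modulus-of-continuity control at all (H\"older continuity of $R'$ gives none for its inverse), so even ``bi-H\"older'' is not available. Finally, even granting a quasiconformal $\varphi$, its dilatation would depend on the particular minimizer $v$ and not only on $C$, $\lambda$, $\tn{Lip}(\gamma)$, and the energy of $v\circ\varphi$ is governed by $E^q_+(v)$, which is not linearly controlled by $\tn{Lip}(\gamma)$; hence the quantitative conclusions $p=p(C,\lambda)$ and $\bigl(E^p_+(u)\bigr)^{1/p}\leq M(C,\lambda)\cdot\tn{Lip}(\gamma)$ do not follow from your argument. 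The zero-area-homotopy-plus-gluing strategy of \cite{LW16} avoids all three problems at once, which is why it is the route taken there.
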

The following simple gluing lemma is crucial in the proofs of theorem~\ref{thm7} and theorem~\ref{thm8} and will also play an important role in the proof of theorem~\ref{thm2}.
\begin{Lem}
\label{lem1}
Let $X$ be a complete metric space, $\gamma_1,\gamma_2:\K \rightarrow X$ be parametrized curves and $p>1$. Let $h\in C^0(\K \times [0,1],X)\cap W^{1,p}(\K\times (0,1),X)$ be a homotopy between $\gamma_1$ and $\gamma_2$ and let $u \in W^{1,p}(\Do,X)$ such that $\tn{tr}(u)=\gamma_1$. Then there exists a disc $v\in W^{1,p}(\Do,X)$ such that $\tn{tr}(u)=\gamma_2$, \[
\tn{Area}(v)=\tn{Area}(u)+\tn{Area}(h)\]
and
\[E^p_+(v)\leq L\cdot (E^p_+(u)+E^p_+(h))\]
 where $L=L(p)$.
\end{Lem}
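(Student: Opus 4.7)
The plan is to realize $\Dc$ as the union of an inner disc $\overline{B_{1/2}}:=\{|y|\le \tfrac 12\}$ and the outer annulus $A:=\{\tfrac 12\le |y|\le 1\}$, carrying $u$ on the inner piece and $h$ on the annular piece. Concretely, fix the biLipschitz maps $\phi:\Dc\to \overline{B_{1/2}}$, $\phi(y)=y/2$, and $\psi:A\to \K\times[0,1]$ given in polar coordinates by $\psi(r,\theta)=(\theta,2r-1)$. Define $v$ by
\begin{equation}
v(y):=\begin{cases} u(\phi\inv(y)) & \text{if } y\in \overline{B_{1/2}},\\ h(\psi(y)) & \text{if } y\in A. \end{cases}
\end{equation}
Then $\psi(\tfrac 12,\theta)=(\theta,0)$ so the annular piece has inner boundary trace $h(\cdot,0)=\gamma_1$, matching the trace of $u\circ \phi\inv$ on $\partial B_{1/2}$; and $\psi(1,\theta)=(\theta,1)$, so the outer boundary trace is $h(\cdot,1)=\gamma_2$.

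The first step is to check that $v\in W^{1,p}(\Do,X)$. On each of $\overline{B_{1/2}}$ and $A$ this is clear from the biLipschitz invariance of Sobolev classes and the hypotheses on $u$ and $h$. That the two pieces can be glued to a global Sobolev map follows, using the characterization of $W^{1,p}(\Do,X)$ by postcomposition with $1$-Lipschitz $f:X\to \R$, from the standard real-valued gluing lemma: two $W^{1,p}$ functions on domains sharing a common Lipschitz boundary that carry the same trace on the common part define a Sobolev function on the union. The continuity of $\gamma_1$ ensures the traces agree (not merely almost everywhere) along $\partial B_{1/2}$. Tracking the outer boundary then gives $\tn{tr}(v)=\gamma_2$.

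The area identity and the energy bound are obtained by the change of variables for approximate metric differentials. For any biLipschitz $\Phi:\Omega'\to \Omega$ and $w\in W^{1,p}(\Omega,X)$, the chain rule $\tn{apmd}_{p'}(w\circ \Phi)(\cdot)=\tn{apmd}_{\Phi(p')}(w)(D\Phi(p')\cdot)$ combined with the substitution formula yields $\tn{Area}(w\circ \Phi)=\tn{Area}(w)$ and $E^p_+(w\circ \Phi)\le C_p(\Phi) E^p_+(w)$, where $C_p(\Phi)$ depends only on the Lipschitz constants of $\Phi$ and $\Phi\inv$. Applied to $\phi$ and $\psi$, whose Lipschitz constants are universal, this gives $\tn{Area}(v\vert_{\overline{B_{1/2}}})=\tn{Area}(u)$ and $\tn{Area}(v\vert_A)=\tn{Area}(h)$, and summing over the disjoint pieces produces the claimed area equality. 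The analogous estimate for $E^p_+$ gives
\begin{equation}
E^p_+(v)=E^p_+(v\vert_{\overline{B_{1/2}}})+E^p_+(v\vert_A)\le L(p)\bigl(E^p_+(u)+E^p_+(h)\bigr)
\end{equation}
with $L(p)$ depending only on $p$.

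The main technical point is the Sobolev gluing across the interface $\partial B_{1/2}$; the rest is a mechanical application of change of variables and additivity of the area and energy integrals over disjoint subdomains. The continuity of the common trace makes the gluing step clean, so I expect no obstacle beyond citing (or, if needed, spelling out) the real-valued version of the gluing statement and reducing to it via $1$-Lipschitz test functions.
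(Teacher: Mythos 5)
Your construction is correct and is essentially the standard argument: the paper gives no proof of its own but refers to Section~2.2 of \cite{LW16}, where the gluing is done in just this way, by identifying $\Dc$ with a smaller concentric disc carrying $u$ plus an annulus biLipschitz-equivalent to the cylinder carrying $h$, matching traces along the interface and using biLipschitz invariance of the area and quasi-invariance of the Reshetnyak energy. Nothing in your outline deviates from that route, and the points you flag (trace matching via continuity of $\gamma_1$, reduction of the gluing to the real-valued case by $1$-Lipschitz postcomposition) are exactly the ones that need checking.
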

See for example section 2.2 in \cite{LW16}.
\section{Plateau's problem for singular curves}
\label{4}
\subsection{Unparametrized variant}
\label{4.1}
Let $X$ be a complete metric space and $\Gamma$ a rectifiable curve in $X$. Let $l:=l(\Gamma)$, $S$ a geodesic circle of circumference $l$ and $\gamma:\K\rightarrow X$ be a constant speed parametrization of $\Gamma$. Consider $\gamma$ a $1$-Lipschitz map $S\rightarrow X$. Let $X_{\Gamma}$ be the metric quotient of $X\sqcup (S\times[0,l])$ along the relation generated by $\gamma(p)\sim (p,0)$ for $p\in S$. Let $P_\Gamma:X_\Gamma\rightarrow X$ be given by the identity on $X$ and $P_\Gamma(p,t):=\gamma(p)$ on $S\times [0,l]$.
\begin{Lem}
\label{lem2}
Let $X_\Gamma$ as described.
\begin{enumerate}
\item \label{en1}
$P_\Gamma:X_\Gamma\rightarrow X$ is $1$-Lipschitz, the inclusion map $\iota_X:X\rightarrow X_{\Gamma}$ is an isometric embedding and the inclusion map $\iota_{S\times[0,l]}:S\times [0,l]\rightarrow X_\Gamma$ is $1$-Lipschitz and locally isometric on $S\times (0,l]$.
\item \label{en2}
Let $\Gamma_l$ be the curve in $X_{\Gamma}$ corresponding to $S\times \{l\}$ in $X_\Gamma$. Then $\Gamma_l$ is a $1$-chord-arc curve of length $l$.
\item \label{en3}
If $X$ is proper, then $X_\Gamma$ is proper.
\item \label{en4}
If $X$ satisfies $(C,l_0)$-quadratic isoperimetric inequality, then $X_{\Gamma}$ satisfies a $(C+1,l_0)$-quadratic isoperimetric inequality and a $\left(C+\frac{1}{2\pi},l_0'\right)$-quadratic isoperimetric inequality where $l_0'=\min\{l_0,l\}$.
\item \label{en5}
If $X$ satisfies property (ET), then $X_{\Gamma}$ satisfies property (ET).
\end{enumerate}
\end{Lem}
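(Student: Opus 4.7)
The plan is to work throughout with the quotient pseudometric on $X_\Gamma$, defined as the infimum of alternating chain sums of distances in $X$ and in $S\times[0,l]$ across the identification $\gamma(p)\sim(p,0)$. For part \ref{en1}, since $\gamma$ is $1$-Lipschitz the projection $P_\Gamma$ respects the identification and does not increase chain sums, so it is $1$-Lipschitz; the composition $P_\Gamma\circ\iota_X$ is the identity of $X$, which forces $\iota_X$ to be isometric; and for $(p,t)$ with $t>0$ a small cylinder neighbourhood avoids $S\times\{0\}$ and hence is untouched by the identification, giving the local isometry claim. For part \ref{en2}, the map $p\mapsto(p,l)$ is a $1$-Lipschitz parametrization of $\Gamma_l$ of length $l$, and any path between $(p,l)$ and $(q,l)$ that leaves the cylinder must descend from and return to height $l$, contributing length at least $2l$; since $d_S(p,q)\leq l/2$ the direct cylinder path is always shortest, so the parametrization is distance preserving and the chord-arc constant is $1$. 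Part \ref{en3} is immediate since closed balls in $X_\Gamma$ are contained in the union of a closed ball in $X$ and a portion of the compact cylinder $S\times[0,l]$.

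Part \ref{en4} is the heart of the argument. Given a Lipschitz loop $c:\K\to X_\Gamma$ with $l(c)<l_0$, write $c(t)=(c_S(t),a(t))$ whenever $c(t)$ lies in the cylinder. The projection $c':=P_\Gamma\circ c$ is a Lipschitz loop in $X$ of length at most $l(c)<l_0$, so the QII of $X$ yields $u\in W^{1,2}(\Do,X)$ with $\tn{tr}(u)=c'$ and $\tn{Area}(u)\leq C\cdot l(c)^2$. Define a homotopy $h:\K\times[0,1]\to X_\Gamma$ from $c'$ to $c$ by $h(t,s):=(c_S(t),sa(t))$ on the cylinder portion and $h(t,s):=c(t)$ elsewhere. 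Working in the flat cylinder one obtains the pointwise bound $|h_t\wedge h_s|=|c_S'(t)|\cdot a(t)\leq|c'(t)|\cdot a(t)$ and hence
\[
\tn{Area}(h)\leq(\max a)\cdot l(c).
\]
Since $c$ is a closed loop and $a\geq 0$, the coordinate $a$ must climb and descend $\max a$, so $\max a\leq l(c)/2$ and $\tn{Area}(h)\leq l(c)^2/2$. Lemma~\ref{lem1} then glues $u$ and $h$ into a disc spanning $c$ of area at most $(C+1)l(c)^2$. To upgrade the constant to $C+\frac{1}{2\pi}$ under the extra assumption $l(c)<l$, one replaces this crude estimate by an application of the Euclidean isoperimetric inequality inside a flat planar chart of the cylinder, which is now available because $c_S$ can no longer wrap around $S$. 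The main obstacle in this part is ensuring the vertical homotopy is Sobolev and that its area combines correctly with that of $u$, which is precisely what Lemma~\ref{lem1} is designed for.

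For part \ref{en5}, let $u\in W^{1,2}(\Do,X_\Gamma)$ and partition $\Do$ up to a Lebesgue null set into the density points of $A:=u\inv(\iota_X(X)\setminus\gamma(S))$, $B:=u\inv(S\times(0,l])$ and $E:=u\inv(\gamma(S))$. On $A$ the map $u$ locally factors through $X$, so $\tn{apmd}_p u$ is induced by an inner product by property (ET) of $X$. On $B$ it factors locally through a Euclidean chart of the open cylinder, hence $\tn{apmd}_p u$ is Euclidean. At density points of $E$ the approximate differential sends tangent vectors into the one-dimensional set $\gamma(S)$, so $\tn{apmd}_p u$ has rank at most one and is automatically a degenerate inner product. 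Thus property (ET) passes from $X$ to $X_\Gamma$.
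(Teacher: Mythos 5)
Parts \ref{en1}, \ref{en2}, \ref{en3} and \ref{en5} of your argument are correct and essentially the same as the paper's, which reads \ref{en1}--\ref{en3} off an explicit formula for the quotient distance and treats \ref{en5} by the same decomposition of the target; for \ref{en4} the paper simply cites an external reference, so there your argument stands on its own — and it has a genuine gap. Your key inequality $\max a\le l(c)/2$ is justified by ``$a$ must climb and descend $\max a$'', but this presupposes that the height coordinate attains the value $0$ somewhere on the loop. Since the height function is $1$-Lipschitz on $X_\Gamma$, what the closed-loop argument actually gives is $\max a-\min a\le l(c)/2$. A Lipschitz loop contained in the open cylinder at large height — for instance a short loop near $S\times\{l\}$, and such loops are exactly the ones relevant for filling $\Gamma_l$ — has $\max a$ close to $l$ while $l(c)$ is arbitrarily small. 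For such a loop your homotopy down to the projected curve genuinely sweeps area of order $(\min a)\cdot l(c)$ (at heights bounded away from $0$ the metric of $X_\Gamma$ is locally the product metric, so no collapsing occurs), which is not bounded by any constant times $l(c)^2$. Hence the proof as written does not establish the $(C+1,l_0)$-quadratic isoperimetric inequality for $X_\Gamma$; the projection-and-drop strategy is intrinsically limited to loops that meet $X$ (equivalently $\min a=0$), where it does give the bound $(C+\tfrac12)\,l(c)^2$.

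The missing case requires a separate filling inside the cylinder: if $\min a>0$, then along $c$ the metric of $X_\Gamma$ agrees locally with the cylinder metric, so either $c_S$ wraps around $S$, in which case $l(c)\ge l\ge\max a$ and the crude bound $(\max a)\,l(c)\le l(c)^2$ suffices, or $c$ lifts to the flat strip $\R\times[0,l]$, where it can be filled within the convex hull of the lift with area at most $\tfrac{1}{4\pi}l(c)^2$ and the filling pushed back into $X_\Gamma$ by the $1$-Lipschitz covering and inclusion maps. Some case distinction of this kind is also what is needed to make your claimed improvement to the constant $C+\tfrac{1}{2\pi}$ for $l(c)<\min\{l_0,l\}$ precise: as stated it is only a gesture, since the loop may leave the cylinder, and the projection to $S$ is not $1$-Lipschitz for $d_{X_\Gamma}$ when $\gamma$ has self-intersections, so ``$c_S$ cannot wrap around $S$'' itself needs an argument. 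Two smaller points: the estimate $|c_S'(t)|\le|c'(t)|$ overloads the symbol $c'$ (already used for $P_\Gamma\circ c$); what you need, and what is true, is $|c_S'(t)|\le$ metric speed of $c$ at points with $a(t)>0$, where $d_{X_\Gamma}$ is locally the product metric. In part \ref{en1}, ``untouched by the identification'' alone does not rule out shortcuts through $X$; the quantitative remark you make in part \ref{en2} (a detour through $X$ costs at least twice the height) is what is needed, and should be invoked there as well.
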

\begin{Rem}
\label{rem1}
For the locally non-compact version of the theorems one has to note: if $X$ is $1$-completed in a ultra completion $X^\omega$, then $X_\Gamma$ is $1$-complemented in $X_\Gamma^\omega$. This is an easy consequence of equation \eqref{eq8} and the fact that $S\times [0,l]$ is compact.
\end{Rem}
\begin{proof}
Let $Y:=S\times [0,l]$ and $d_X$ and $d_Y$ be the distance functions of $X$ and $Y$ respectively. The distance $d$ on $X_\Gamma$ is given as follows, see \cite{Crearb}.
\begin{equation}
\label{eq8}
d([x],[y])=\begin{cases}
\scriptstyle \underset{p,q\in S}{\min}\left\{d_{Y}(x,y),\ d_Y(x,(p,0))+d_X(\gamma(p),\gamma(q))+d_Y((q,0),y)\right\} &; x,y\in Y\\
\underset{p\in S}{\min}\ d_X(x,\gamma(p)) +d_Y((p,0),y) &; x \in X, y \in Y\\
d_X(x,y) &; x,y \in X
.\end{cases}
\end{equation}
\ref{en1}. and \ref{en2}. are direct consequences of \eqref{eq8}. \ref{en3}. is a consequence of \ref{en1}. For \ref{en4}. see \cite{Crearb}. To see \ref{en5}. note that $X$ has property (ET) by assumption and $X_\Gamma\setminus X$ has property (ET) as it is locally isometric to $\R^2$.
\end{proof}
We prove the following stronger variant of theorem~\ref{thm2}.
\begin{Thm}
\label{thm9}
Let $X$ be a complete metric space satisfying a $(C,l_0)$-quadratic isoperimetric inequality and $\Gamma$ a closed rectifiable curve in $X$ such that $\tn{Fill}(\Gamma)<\infty$. Then there is a disc $u$ of least area spanning $\Gamma$. One may choose \[
u\in W^{1,p}(\Do,X_\Gamma)\cap C^\alpha_\tn{loc}(\Do,X_\Gamma)\cap C^\beta(\Dc,X_\Gamma)
\] 
where $p=p(C)>2$, $\alpha=\left(8\pi C+4\right)\inv$ and $\beta=\frac{\alpha}{9}$. If $X$ satisfies property (ET), then $\alpha$ may be improved to $\alpha=(4\pi C+2)\inv$.
\end{Thm}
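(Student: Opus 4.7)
The strategy, sketched in Section \ref{1.2}, is to transport Plateau's problem for the singular $\Gamma\subset X$ to the Jordan curve $\Gamma_l\subset X_\Gamma$ and then project back. Write $l:=l(\Gamma)$. Lemma \ref{lem2} supplies everything about $X_\Gamma$: it is proper when $X$ is, inherits property (ET), satisfies a $\bigl(C+\tfrac{1}{2\pi},\min\{l_0,l\}\bigr)$-quadratic isoperimetric inequality, contains $\Gamma_l$ as a $1$-chord-arc Jordan curve of length $l$, and admits the $1$-Lipschitz retraction $P_\Gamma\colon X_\Gamma\to X$. The assumption $\tn{Fill}(\Gamma)<\infty$ passes to $\tn{Fill}_{X_\Gamma}(\Gamma_l)<\infty$ by gluing the flat strip parametrization $h(p,t)=(p,tl)$, of area $l^2$, to any $\Gamma$-spanning disc via Lemma \ref{lem1}.

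Theorem \ref{thm5} applied to $\Gamma_l$ in $X_\Gamma$ produces an area-minimizing $Q$-quasiconformal disc $v\in W^{1,2}(\Do,X_\Gamma)$, with $Q=\sqrt{2}$ in general and $Q=1$ under property (ET). Theorem \ref{thm6}(2), invoked with $\lambda=1$ and isoperimetric constant $C+\tfrac{1}{2\pi}$, then upgrades $v$ to $W^{1,p}(\Do,X_\Gamma)\cap C^\alpha_{\tn{loc}}(\Do,X_\Gamma)\cap C^\beta(\Dc,X_\Gamma)$ for the claimed $p>2$: the formulas $\alpha=(4\pi Q^2(C+\tfrac{1}{2\pi}))^{-1}$ and $\beta=\alpha/(1+2\lambda)^2$ evaluate to $\alpha=(8\pi C+4)^{-1}$ (or $(4\pi C+2)^{-1}$ when $X$ has property (ET)) and $\beta=\alpha/9$.

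Set $u:=P_\Gamma\circ v$. Since $P_\Gamma$ is $1$-Lipschitz, the Sobolev and Hölder estimates pass to $u$; moreover $\tn{tr}(u)=P_\Gamma\circ\gamma_l=\gamma$, so $u$ spans $\Gamma$. On $U:=v^{-1}(S\times(0,l])$ the map $P_\Gamma$ factors through the one-dimensional set $\gamma(\K)\subset X$, forcing $\tn{apmd}_p u$ to have rank at most one and $J(\tn{apmd}_p u)=0$ almost everywhere on $U$. Consequently
\begin{equation}
\tn{Area}(u)=\tn{Area}(v)-A_s,\qquad A_s:=\int_U J(\tn{apmd}_p v)\,\tr\Leb^2.
\end{equation}
Minimality of $v$ and the gluing above give $\tn{Area}(v)\leq\tn{Fill}(\Gamma)+l^2$, so combined with the bound $A_s\geq l^2$ below we obtain $\tn{Area}(u)\leq\tn{Fill}(\Gamma)$; equality follows since $u$ spans $\Gamma$.

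The heart of the argument is the bound $A_s\geq l^2$. Theorem \ref{thm4} provides a continuous representative of $v$ satisfying Lusin's property (N), so the area formula \eqref{eq5} reduces the claim to $v(\Do)\supseteq S\times(0,l)$. Were some $x_0\in S\times(0,l)$ omitted, $v$ would restrict to a continuous extension of $\gamma_l$ to a disc in $X_\Gamma\setminus\{x_0\}$, forcing $\gamma_l$ to be null-homotopic there. But a van Kampen computation yields $\pi_1(X_\Gamma\setminus\{x_0\})=\pi_1(X)*_{\Z}F_2$, where $F_2=\langle a,b\rangle$ is the fundamental group of the punctured strip and the amalgamation identifies the generator of $\pi_1(\Gamma)=\Z$ with $a$; under this identification $\gamma_l$ becomes the reduced word $[\gamma]\cdot b$, which is nontrivial regardless of $\pi_1(X)$. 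The resulting contradiction yields $v(\Do)\supseteq S\times(0,l)$ and hence $A_s\geq\mathcal{H}^2(S\times(0,l))=l^2$. The locally non-compact case follows verbatim using Remark \ref{rem1} and the \cite{GWar} analogues of Theorems \ref{thm5}--\ref{thm6}.
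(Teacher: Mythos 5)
Your construction is the paper's own: build $X_\Gamma$, solve Plateau's problem for the $1$-chord-arc Jordan curve $\Gamma_l$ via Theorems \ref{thm5} and \ref{thm6} with $\lambda=1$ and isoperimetric constant $C+\frac{1}{2\pi}$, and push forward by the $1$-Lipschitz retraction $P_\Gamma$; your exponents match. Two of your substitutions are fine and even add detail the paper suppresses: the pointwise argument that $J(\tn{apmd}\,u)=0$ a.e.\ on $v\inv(S\times(0,l])$ replaces the paper's bookkeeping through the area formula \eqref{eq5} and multiplicities, and your topological justification of $v(\Dc)\supseteq S\times(0,l)$ expands the paper's bare ``by continuity''. (Small caveats there: since $[\gamma]$ may be trivial or torsion in $\pi_1(X)$, the edge group need not inject, so write the van Kampen output as the pushout $\pi_1(X)\ast\langle b\rangle$, in which $[\gamma]\,b$ is still nontrivial; or avoid $\pi_1(X)$ and path-connectedness issues altogether by composing with the continuous map $X_\Gamma\rightarrow\Dc$ collapsing $X$ to the origin and sending $(p,t)\mapsto\frac{t}{l}\,p$, so that a missed interior strip point becomes a missed interior point of the disc while $\gamma_l$ maps to $\partial\Dc$ with degree one.)

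There is, however, one genuine gap: the inequality $\tn{Area}(v)\leq\tn{Fill}(\Gamma)+l^2$. You derive it by ``gluing the flat strip to any $\Gamma$-spanning disc via Lemma \ref{lem1}'', but Lemma \ref{lem1} requires the trace of the disc to coincide with one end of the homotopy, and the strip homotopy $h(p,t)=(p,tl)$ ends at the \emph{constant speed} parametrization $\gamma$, whereas an arbitrary disc spanning $\Gamma$ has trace $\gamma\circ R$ for some monotone reparametrization $R$ which in general cannot be absorbed (it need not even be Sobolev, so interpolating it into a Sobolev homotopy is not available). What the gluing actually yields is $\tn{Fill}(\Gamma_l)=\tn{Fill}(\gamma_l)\leq\tn{Fill}(\gamma)+l^2$, and to replace $\tn{Fill}(\gamma)$ by $\tn{Fill}(\Gamma)$ --- and even to know $\tn{Fill}(\gamma)<\infty$, hence $\tn{Fill}(\Gamma_l)<\infty$, so that Theorem \ref{thm5} applies --- you must invoke Theorem \ref{thm7}, exactly as the paper does in \eqref{eq9}; the paper stresses that this reparametrization-invariance is a nontrivial quoted result. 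With that citation inserted your argument closes.
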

\begin{proof}
Let $\gamma_l$ be the constant speed parametrization of $\Gamma_l$. Then $\iota_{S\times[0,l]}:S\times [0,l]\rightarrow X_\Gamma$ gives a homotopy between $\gamma$ and $\gamma_l$ of area $l^2$. So by theorem~\ref{thm7} and lemma~\ref{lem1}
\begin{equation}
\label{eq9}
\tn{Fill}(\Gamma_l)=\tn{Fill}(\gamma_l)\leq\tn{Fill}(\gamma)+l^2= \tn{Fill}(\Gamma)+l^2.
\end{equation}
Note here that as $X$ is a $1$-Lipschitz retract of $X_\Gamma$ the quantities $\tn{Fill}(\gamma)$ and $\tn{Fill}(\Gamma)$ are the same when considered within $X$ and within $X_\Gamma$.\par 
By theorem~\ref{thm5} there exists a disc of least area $v\in W^{1,2}(\Do,X_\Gamma)$ spanning $\Gamma_l$ that is $Q$-quasiconformal, where $Q=1$ if $X$ satisfies property (ET) and $Q=\sqrt{2}$ otherwise. By lemma~\ref{lem2}, theorem~\ref{thm6} and theorem~\ref{thm4} we may furthermore assume that
\[
v\in W^{1,p}(\Do,X_\Gamma)\cap C^\alpha_{\tn{loc}}(\Do,X_\Gamma)\cap C^\beta(\Dc,X_\Gamma)
\] and $v$ satisfies Lusin's property (N) where $p=p(C)>2$, 
\[
\alpha=\left(4\pi Q^2 \left(C+\frac{1}{2\pi}\right)\right)\inv=(4\pi Q^2 C+2Q^2)\inv
\]
and $\beta=\frac{\alpha}{9}$. By continuity of $v$ on $\Dc$ one has $S\times [0,l] \subset u(\Dc)$. Then $u:=P_\Gamma\circ v$ is a disc spanning $\Gamma$ having the desired regularity properties. As $v$ and $u$ satisfy Lusin's property (N) by \eqref{eq5} one has
\begin{align}
\tn{Area}(u)&=\int_{X} \tn{card}( u\inv(y))\ \tr \mathcal{H}^2(y)=\int_{X} \tn{card}( v\inv(y))\ \tr \mathcal{H}^2(y)\\
\label{eq10}
&\leq \int_{X_\Gamma} \tn{card}( v\inv(y))\ \tr \mathcal{H}^2(y)-l^2=\tn{Area}(v)-l^2.
\end{align}
So by \eqref{eq9} and \eqref{eq10}
\begin{equation}
\label{eq11}
\tn{Fill}(\Gamma_l)\leq \tn{Fill}(\Gamma)+l^2 \leq \tn{Area}(u)+l^2 \leq \tn{Area}(v)=\tn{Fill}(\Gamma_l).
\end{equation}
So $\tn{Area}(u)=\tn{Fill}(\Gamma)$ which proves theorem~\ref{thm9}.
\end{proof}
Theorem~\ref{thm1} follows from theorem~\ref{thm9} by noting that $\R^n$ has property (ET) and satisfies a $\frac{1}{4\pi}$-quadratic isoperimetric inequality.
\begin{Rem}
\label{rem2}
Let $X$ be a $\tn{CAT}(0)$ space and $\Gamma$ a rectifiable curve in $X$ of finite total geodesic curvature $\kappa(\Gamma)$. In this case one may improve the constant $\alpha$ in theorem~\ref{thm9} to $1$ and hence the map $u$ will be locally Lipschitz on $\Do$. However the constant $\beta$ will depend on $\kappa(\Gamma)$ in this case. To achieve this one performs the same proof using the funnel extension discussed in section~3.1 of \cite{Sta18} instead of $X_\Gamma$.
\end{Rem}
\subsection{Parametrized variant}
\label{4.2}
The proof of the following variant of theorem~\ref{thm3} is very similar to the proof discussed in the previous subsection.
\begin{Thm}
\label{thm10}
Let $X$ be a proper metric space satisfying a $(C,l_0)$-quadratic isoperimetric inequality, $\Gamma$ a closed rectifiable curve in $X$ such that $\tn{Fill}(\Gamma)<\infty$ and $\eta$ be a Lipschitz parametrization of $\Gamma$.\par
Then there there exists $p=p(C)>2$ and a disc $u\in W^{1,p}(\Do,X)$ such that $\tn{tr}(u)=\gamma$ and $\tn{Area}(u)=\tn{Fill}(\Gamma)$. If $l(\Gamma)<l_0$, then one may furthermore achieve
\begin{equation}
\label{eq12}
\left(E^p_+(u)\right)^{\frac{1}{p}}\leq M \cdot \tn{Lip}(\gamma)
\end{equation}
where $M=M(C)$.
\end{Thm}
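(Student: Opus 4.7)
The plan is to follow the same strategy as in the proof of Theorem~\ref{thm9}, but with Theorem~\ref{thm5} replaced by the parametrized result Theorem~\ref{thm8}. Set $l:=l(\Gamma)$ and form $X_\Gamma$, $\Gamma_l$, $P_\Gamma$ as in Section~\ref{4.1}. By Lemma~\ref{lem2}, $X_\Gamma$ is proper, satisfies a $(C+1,l_0)$-quadratic isoperimetric inequality, and $\Gamma_l$ is a $1$-chord-arc Jordan curve of length $l$ in $X_\Gamma$.

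The first step is to lift $\eta$ to a Lipschitz parametrization $\eta_l$ of $\Gamma_l$. Writing $\eta=\bar\gamma\circ R$, where $\bar\gamma$ is the constant speed parametrization of $\Gamma$ and $R:\K\to\K$ the associated monotone reparametrization, and identifying $\K$ with $S$ via the canonical $\tfrac{l}{2\pi}$-similarity, I set $\eta_l(p):=(R(p),l)\in S\times\{l\}\subset X_\Gamma$. A short calculation from~\eqref{eq8}, using that $S\times\{l\}$ inherits its metric from $S$ via the $1$-Lipschitz inclusion provided by Lemma~\ref{lem2}, gives $\tn{Lip}(\eta_l)\leq\tn{Lip}(\eta)$.

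Next, I apply Theorem~\ref{thm8} to the $1$-chord-arc Jordan curve $\Gamma_l$ in $X_\Gamma$ with Lipschitz parametrization $\eta_l$, noting that $\tn{Fill}(\Gamma_l)<\infty$ by~\eqref{eq9}. This yields $p=p(C)>2$ and a disc $v\in W^{1,p}(\Do,X_\Gamma)$ such that $\tn{tr}(v)=\eta_l$ and $\tn{Area}(v)=\tn{Fill}(\Gamma_l)$, and, when $l<l_0$, the bound $(E^p_+(v))^{\frac{1}{p}}\leq M\cdot\tn{Lip}(\eta_l)\leq M\cdot\tn{Lip}(\eta)$ for $M=M(C)$. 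Theorem~\ref{thm4} then lets me assume $v$ also satisfies Lusin's property~(N).

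Setting $u:=P_\Gamma\circ v$ completes the construction. Since $P_\Gamma$ is $1$-Lipschitz, $u\in W^{1,p}(\Do,X)$ with $E^p_+(u)\leq E^p_+(v)$, and $\tn{tr}(u)=P_\Gamma\circ\eta_l=\eta$ by the very definition of $P_\Gamma$. Lusin's~(N) descends to $u$, so the argument of~\eqref{eq10} using the area formula~\eqref{eq5} yields $\tn{Area}(u)\leq\tn{Area}(v)-l^2$. Combined with $\tn{Fill}(\Gamma_l)\leq\tn{Fill}(\Gamma)+l^2$ from~\eqref{eq9} and the trivial bound $\tn{Area}(u)\geq\tn{Fill}(\Gamma)$, this forces equality throughout and hence $\tn{Area}(u)=\tn{Fill}(\Gamma)$. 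The only genuinely new ingredient relative to Theorem~\ref{thm9} is the controlled lift $\eta_l$; I expect this to be the step that requires the most care.
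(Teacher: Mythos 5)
Your proof is correct, and its skeleton (glue the collar $X_\Gamma$, solve the parametrized problem for the $1$-chord-arc curve $\Gamma_l$ via Theorem~\ref{thm8}, project by the $1$-Lipschitz retraction $P_\Gamma$, and recover $\tn{Area}(u)=\tn{Fill}(\Gamma)$ from \eqref{eq9} together with the area-formula computation \eqref{eq10}) is the same as the paper's. Where you genuinely diverge is in how the given parametrization $\eta$ is handled: the paper first treats the case $\eta=\gamma$ of the constant speed parametrization, so that $\tn{tr}(v)=\gamma_l$ projects to $\gamma$, and then passes to a general Lipschitz $\eta$ by attaching a zero-area Lipschitz homotopy between $\eta$ and $\gamma$ (\cite[lemma 3.6]{LWYar}) via the gluing Lemma~\ref{lem1}; you instead lift $\eta$ directly to a Lipschitz parametrization $\eta_l$ of $\Gamma_l$ and feed that to Theorem~\ref{thm8}, so that $P_\Gamma\circ v$ has trace exactly $\eta$ with no further gluing. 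Your route avoids the external homotopy lemma and the extra constants it injects into the energy estimate, at the price of the lift bound $\tn{Lip}(\eta_l)\leq\tn{Lip}(\eta)$, which you assert but whose justification as written is incomplete: the $1$-Lipschitz inclusion of $S\times\{l\}$ only gives $d(\eta_l(p),\eta_l(q))\leq d_S(R(p),R(q))$, and you still need that $R$, viewed as a map into the circle of circumference $l$, is $\tn{Lip}(\eta)$-Lipschitz. This is true and easy to supply: with $R$ the arclength reparametrization, monotonicity of $R$ and the constant speed of $\bar\gamma$ give $d_S(R(p),R(q))\leq l\left(\eta_{|[p,q]}\right)\leq \tn{Lip}(\eta)\cdot d_\K(p,q)$, so the gap is cosmetic. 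The remaining points check out: $X_\Gamma$ is proper and satisfies a $(C+1,l_0)$-quadratic isoperimetric inequality, $\eta_l$ is still a monotone degree-one parametrization of $S\times\{l\}$ so the topological fact $S\times(0,l)\subset v(\Dc)$ behind \eqref{eq10} persists, and the constants from Theorem~\ref{thm8} applied with $\lambda=1$ depend only on $C$, as required.
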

\begin{proof}
First assume $\eta=\gamma$ is the constant speed parametrization of $\Gamma$. Consider again the space $X_\Gamma$ as in the previous subsection. By theorem~\ref{thm8} there exists $p=p(C)>2$ and a disc $v\in W^{1,p}(\Do,X_\Gamma)$ such that $\tn{tr}(v)=\gamma_l$ and $\tn{Area}(v)=\tn{Fill}(\gamma_l)$. If $l(\Gamma)=l(\Gamma_L)<l_0$, then one may furthermore achieve
\begin{equation}
\label{eq13}
\left(E^p_+(u)\right)^\frac{1}{p}\leq M\cdot \tn{Lip}(\gamma_L)\leq  M\cdot\tn{Lip}(\eta)
\end{equation}
where $M=M(C)$. Set $u:=P_\Gamma \circ v$. Then $u\in W^{1,p}(\Do,X)\cap C^0(\Dc,C)$. Hence $\tn{tr}(u)=u_{|\K}=\eta$ and the same argument as in the proof of theorem~\ref{thm9} implies $\tn{Area}(u)=\tn{Fill}(\Gamma)$.\par 
Now for a general Lipschitz curve $\eta$ by \cite[lemma 3.6]{LWYar} there exists a universal constant $K>0$ and a Lipschitz homotopy $h:\K \times [0,1]\rightarrow X$ between $\eta$ and $\gamma$ such that $\tn{Area}(h)=0$ and $\tn{Lip}(h)\leq M \cdot \tn{Lip}(\eta)$. So the proof is completed by applying lemma~\ref{lem1} to the disc constructed for $\gamma$ and the homotopy $h$ . 
\end{proof}
As a consequence of theorem~\ref{thm10} if a proper space $X$ satisfies a $(C',l_0)$-quadratic isoperimetric for every $C'>C$ then it satisfies a $C$-quadratic isoperimetric inequality. Corollary~\ref{cor2} is an immediate consequence of this fact and theorem~$1.4$ in \cite{LW17b}.
\begin{Cor}
\label{cor3}
Let $(X_n)$ be a sequence of proper, geodesic metric spaces satisfying a $(C,l_0)$-quadratic isoperimetric inequality. If $X_\omega$ is an ultralimit of $(X_n)$, then $X_\omega$ satisfies a $(C,l_0)$-quadratic isoperimetric inequality. 
\end{Cor}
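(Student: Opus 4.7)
The plan is to combine two ingredients: Theorem~1.8 of \cite{LWYar} in its original (unimproved) form, which transfers a quadratic isoperimetric inequality to ultralimits up to an arbitrarily small loss in the constant, together with the consequence of Theorem~\ref{thm10} recorded immediately before this corollary---namely that a proper metric space satisfying a $(C',l_0)$-quadratic isoperimetric inequality for every $C'>C$ already satisfies a $(C,l_0)$-quadratic isoperimetric inequality. Since an ultralimit of proper geodesic metric spaces is again proper and geodesic, the task reduces to verifying the $(C',l_0)$-condition on $X_\omega$ for every $C'>C$.

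First I would fix $C'>C$ and set $\varepsilon:=(C'-C)/2>0$. The sequence $(X_n)$ satisfies a $(C,l_0)$-quadratic isoperimetric inequality by hypothesis, and the original Theorem~1.8 of \cite{LWYar} then yields a $(C+\varepsilon,l_0)$-quadratic isoperimetric inequality on $X_\omega$; in particular $X_\omega$ satisfies a $(C',l_0)$-quadratic isoperimetric inequality. As $C'>C$ was arbitrary, the reduction of the previous paragraph applies and $X_\omega$ satisfies the sharp $(C,l_0)$-quadratic isoperimetric inequality.

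The heart of the argument, where the $\varepsilon$-loss is removed, sits in Theorem~\ref{thm10}: for any Lipschitz parametrization $\gamma$ of a closed curve $\Gamma$ of length less than $l_0$, there exists a Sobolev disc $u$ with $\tn{tr}(u)=\gamma$ and $\tn{Area}(u)=\tn{Fill}(\Gamma)$. Consequently, if $X_\omega$ satisfies a $(C',l_0)$-quadratic isoperimetric inequality for every $C'>C$, then $\tn{Fill}(\Gamma)\le C\cdot l(\gamma)^2$, and the minimizer supplied by Theorem~\ref{thm10} realizes this bound, establishing the $(C,l_0)$-inequality with no slack. I do not foresee a substantive obstacle beyond verifying that $X_\omega$ is proper (standard for ultralimits of proper geodesic metric spaces with a chosen sequence of basepoints), so that the hypotheses of Theorem~\ref{thm10} and of its consequence are indeed met.
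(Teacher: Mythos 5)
There is a genuine gap, and it sits exactly at the point you wave off as ``standard'': an ultralimit of proper geodesic metric spaces need \emph{not} be proper. Properness passes to ultralimits only under uniform compactness assumptions on balls; in general it fails badly. For instance, taking $X_n=\R^n$ with basepoints at the origin, the points represented by the sequences of standard unit vectors are pairwise at distance $\sqrt{2}$ in the ultralimit, so the closed unit ball around the basepoint is not compact; asymptotic cones furnish further examples. Since the consequence of Theorem~\ref{thm10} that you invoke (a space satisfying a $(C',l_0)$-quadratic isoperimetric inequality for every $C'>C$ satisfies the $(C,l_0)$-inequality) is only available for \emph{proper} spaces, your reduction does not apply to $X_\omega$ in general, and the argument breaks precisely where the sharp constant is supposed to be extracted.

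This is exactly the case the paper's proof addresses: it first notes that \cite[theorem 5.1]{LWYar} gives the $C'$-inequality on $X_\omega$ for every $C'>C$, so that \emph{if} $X_\omega$ happens to be proper one concludes as you do; but in the non-proper case one must instead rerun the proof of \cite[theorem 5.1]{LWYar}, replacing theorem~5.2 therein by Theorem~\ref{thm10} applied in the spaces $X_n$ (which \emph{are} proper by hypothesis). There Theorem~\ref{thm10} produces, for the approximating curves in $X_n$, fillings of area exactly $\tn{Fill}$ (hence at most $C\cdot l^2$, with no $\varepsilon$-loss) together with the energy bound \eqref{eq12}, and these discs can then be passed to the ultralimit to fill the given curve in $X_\omega$ with area at most $C\cdot l(\gamma)^2$. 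So the fix is not to verify properness of $X_\omega$ (which is false in general), but to apply the sharp filling result upstairs in the $X_n$ and take ultralimits of the minimizers.
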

\begin{proof}
By \cite[theorem 5.1]{LWYar} $X_\omega$ satisfies a $C'$-quadratic isoperimetric inequality for every $C'>C$. So if $X_\omega$ is proper we are done. If not one may perform the same proof as in \cite[theorem 5.1]{LWYar} but applying our theorem~\ref{thm10} instead of theorem~5.2 therein.
\end{proof}
\section{Acknowledgements}
\label{5}
I would like to thank my PhD advisor Alexander Lytchak for great support in everything.
\bibliographystyle{alpha}
\bibliography{SingularPlateauProblem}
\end{document}